\theoremstyle{plain}
\newtheorem{lemma}{Lemma}
\newtheorem{theorem}{Theorem}
\newtheorem{corollary}{Corollary}
\newtheorem{prop}{Proposition}
\theoremstyle{remark}
\newtheorem{remark}{\bf Remark}
\newtheorem*{remark*}{\bf Remark}
\newtheorem*{hypothesis*}{\bf Hypothesis}
\newtheorem*{prob*}{\bf Problem}
\newtheorem*{quest*}{\bf Question}
\renewcommand{\Im}{\operatorname{Im}}
\renewcommand{\Re}{\operatorname{Re}}
\patchcmd{\section}{\scshape}{\bfseries}{}{}
\renewcommand{\@secnumfont}{\bfseries}
\numberwithin{equation}{section}
\numberwithin{lemma}{section}
\numberwithin{theorem}{section}
\numberwithin{prop}{section}
\numberwithin{remark}{section}
\newcommand{\pfrac}[2]{\left(\frac{#1}{#2}\right)}
\begin{document}

\author{Alexander Dunn}
\address{The Division of Physics, Mathematics and Astronomy, 
Caltech, 1200 E California Blvd, Pasadena, CA, 91125}
\email{ajdunn2@illinois.edu}
\subjclass[2010]{Primary 11F03, 11F66, 11F68.}
\keywords{$L$-function, quadratic twist, ranks, modular forms, elliptic curves, multiple Dirichlet series}

\title{On a problem of Hoffstein and Kontorovich}

\begin{abstract}
Let $\pi$ be a cuspidal automorphic representation of
$\operatorname{GL}_2(\mathbb{A}_{\mathbb{Q}})$
and $d$ be a fundamental discriminant.
Hoffstein and Kontorovich ask for a bound
on the least $|d|$ (if it exists)
such that the central value $L(1/2, \pi \otimes \chi_d)  \neq 0$.
The bound should be given in terms of the weight, Laplace eigenvalue 
and/or level of $\pi$.

Let $f$ be a holomorphic twist-minimal newform of even weight $\ell$, odd cubefree
level $N$, and trivial nebentypus. When $\pi \cong \pi_f$ and the squarefree part of
$N$ is of appropriate size,
we conditionally improve upon level aspect results of Hoffstein and 
Kontorovich under subconvexity (with a sub-Weyl exponent)
for automorphic $L$-functions.
As a consequence we conditionally prove that 
given an elliptic curve $E/\mathbb{Q}$ of conductor $N$,  
there exists a small twist that has Mordell--Weil rank equal to zero.
\end{abstract}

\maketitle

\section{Introduction}

\subsection{Statement of results}
Hoffstein and Kontorovich \cite{HK} pose the following
\begin{prob*} \label{mainq}
Let $\pi$ be a cuspidal automorphic representation
of $\operatorname{GL}_2(\mathbb{A}_{\mathbb{Q}})$
and $d$ be a fundamental discriminant.  
If they exist, establish a bound for the least value of $|d|$ 
(relative to the data attached to $\pi$)
such that 
\begin{equation} \label{nonvanish}
L(1/2, \pi \otimes \chi_d) \neq 0.
\end{equation}
\end{prob*}

In this paper we focus on holomorphic cuspidal newforms 
on $\operatorname{GL}_2$ over $\mathbb{Q}$.
Let $N$ will be a cubefree odd integer having factorisation 
\begin{equation} \label{factorise}
N=N_0 N_1^2, \quad \mu^2(N_0)=\mu^2(N_1)=1,
\end{equation}
and $\ell$ be an even integer. Let $S_{\ell}(N)$ denote the space of holomorphic cusp forms 
on $\Gamma_0(N)$ with weight $\ell$ and trivial nebentypus. Similarly, let 
$S^{\text{new}}_{\ell}(N)$ be the space of such arithmetically normalised newforms 
(first Fourier coefficient is normalised to be $1$).
For each $f \in S^{\text{new}}_{\ell}(N)$, let $\pi_f \cong \otimes^{\prime}_p \pi_{f,p}$ 
denote the corresponding cuspidal automorphic representation of 
$\operatorname{GL}_2(\mathbb{A}_{\mathbb{Q}})$.  

Let $\mathfrak{c}(\pi_f):=\prod \mathfrak{c}(\pi_{f,p})=N$ 
denote the arithmetic conductor of $\pi_f$. 
It is said that $f \in S_{\ell}^{\text{new}}(N)$ is \emph{twist-minimal} if 
$\mathfrak{c}(\pi_f \otimes \psi) \geq \mathfrak{c}(\pi_f)$ for all Dirichlet 
characters $\psi$. 

In the case of holomorphic cusp forms,
Waldspurger's formula \cite{Wal}
and the Riemann--Roch theorem
imply that that there exists a quadratic twist of fundamental discriminant 
 $d$ satisfying $|d| \ll_{\varepsilon} (\ell N)^{1+\varepsilon}$
and \eqref{nonvanish}.
This bound is on par with what a ``convexity bound" 
for the relevant multiple Dirichlet series would imply 
(see section \ref{sketch} for more discussion). 

Hoffstein and Kontorovich 
\cite{HK} prove bounds of the above quality 
for general $\pi$ on $\operatorname{GL}_r(\mathbb{A}_{\mathbb{Q}})$, $r=1,2,3$.
The investigations in \cite{HK} are restricted to $r \leq 3$ in order to guarantee 
the relevant multiple Dirichlet series $Z(s,w;\pi)$
have meromorphic continuation past the point $(1/2,1)$. 

Conditional on a strong uniform quantitative subconvexity 
hypothesis for automorphic $L$-functions,
we improve the bound $|d| \ll_{\varepsilon} (\ell N)^{1+\varepsilon}$ 
in the level aspect, for some levels $N$.

\begin{theorem} \label{mainthm}
Let $N=N_0 N_1^2$ be an odd cubefree odd integer
and $f \in S^{\emph{new}}_{\ell}(N)$ be twist minimal. Suppose there is $0<\delta_1<1/4$ 
such that for all positive fundamental discriminants $d$ we have 
\begin{equation} \label{GL2lindsub}
L(1/2+it,\pi_f \otimes \chi_{d}) \ll_{\ell,A,\varepsilon}  
\mathfrak{c}(\pi_f \otimes \chi_d)^{1/4-\delta_1+\varepsilon} (1+|t|)^{A},
\end{equation}
for some $A>0$. Suppose there exists a $0<\delta_2<1/4$ such that 
\begin{equation} \label{symsub}
L(1/2+it,\emph{Sym}^2 \pi_f)  \ll_{\ell,A,\varepsilon} \mathfrak{c}(\emph{Sym}^2 \pi_f)^{1/4-\delta_2+\varepsilon} (1+|t|)^{A},
\end{equation}
for some $A>0$.

Then there exists a fundamental discriminant $d_0$ satisfying
\begin{equation} \label{discbound}
1 \leq d_0 \ll_{\ell,\varepsilon} N^{\varepsilon} \Big( N^{9/4-7 \delta_1} N_0^{7/4-5 \delta_1}
+\frac{N^{1-6 \delta_2  }}{N_0^{2 \delta_2}}  \Big)
\quad \text{and} \quad (d_0,2N)=1, 
\end{equation}
such that
\begin{equation*}
L(1/2,\pi_f \otimes \chi_{d_0}) \neq 0.
\end{equation*}
The constant in \eqref{discbound} is ineffective in terms of $\varepsilon>0$.
\end{theorem}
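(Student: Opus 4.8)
The plan is to study, following the strategy of Hoffstein--Kontorovich but feeding in subconvexity where they used convexity, the double Dirichlet series
\[
Z(s,w)\;=\;\sum_{\substack{d\ge 1\\ (d,2N)=1}}\frac{L(s,\pi_f\otimes\chi_d)\,a_d(s)}{d^{\,w}},
\]
where the $a_d(s)$ are the customary correction factors, supported on the ``almost fundamental'' $d$ coprime to $2N$ and encoding the ramified local Euler factors at $p\mid 2N$, chosen so that the family $\{\pi_f\otimes\chi_d\}$ is stable under the relevant functional equations. Since $\pi_f$ is an automorphic representation of $\operatorname{GL}_2$, the series $Z(s,w)$ admits meromorphic continuation to a region containing a neighbourhood of $(1/2,1)$: this is obtained by iterating the $s\mapsto 1-s$ functional equations of the $L(s,\pi_f\otimes\chi_d)$ (of conductor $Nd^2$ for $(d,N)=1$, using twist-minimality) against the $w\mapsto 1-w$ functional equations of the quadratic Dirichlet $L$-functions that appear after expanding $L(s,\pi_f\otimes\chi_d)=\sum_n\lambda_f(n)\chi_d(n)n^{-s}$ and applying Poisson summation in $d$. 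The archimedean gamma factors carry the $\ell$-dependence, and the factorisation $N=N_0N_1^2$ with $N$ cubefree fixes the ramified local representations of $\pi_f$ and hence the shape of the $a_d$.

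Next I would isolate the polar behaviour at $w=1$. In the expansion above the only polar line with $\Re w$ near $1$ is $w=1$, produced by the perfect-square terms $n=m^2$, and one computes
\[
\operatorname*{Res}_{w=1}Z(1/2,w)\;=\;c\cdot L(1,\mathrm{Sym}^2\pi_f)\cdot\!\!\prod_{p\mid 2N}(\text{local factor}),\qquad c\ne 0,
\]
which is $\gg_{\ell}(\log N)^{-1}$, since $L(1,\mathrm{Sym}^2\pi_f)\gg_{\ell}(\log N)^{-1}$ (there is no Siegel zero for $\mathrm{Sym}^2\pi_f$) and the finite product is harmless. Thus $Z(1/2,w)$ has a pole at $w=1$ whose contribution, after the contour shift below, is a genuine main term of size $\gg_{\ell}N^{-\varepsilon}X$.

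Fix a smooth $\Phi$ supported in $[1,2]$ with $\widehat\Phi(1)\ne 0$. By Mellin inversion and a contour shift to the left of $w=1$ into the continued region,
\[
\sum_{(d,2N)=1}L(1/2,\pi_f\otimes\chi_d)\,a_d\,\Phi\!\big(\tfrac dX\big)\;=\;\Big(\operatorname*{Res}_{w=1}Z(1/2,w)\Big)\widehat\Phi(1)\,X\;+\;\frac{1}{2\pi i}\int_{(\sigma)}Z(1/2,w)\,\widehat\Phi(w)\,X^{w}\,dw.
\]
To bound the remaining integral I would insert the approximate functional equation for $Z(1/2,\cdot)$ --- equivalently, for each $L(1/2,\pi_f\otimes\chi_d)$, of length $\asymp d\sqrt N$ --- open the $n$-sum, and apply Poisson summation in $d$. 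The principal frequency reproduces the main term already recorded; the surviving terms split in two. The square $n=m^2$ contribute an average of $\mathrm{Sym}^2$-values on the critical line, bounded by \eqref{symsub}, which after tracking the conductor $\mathfrak c(\mathrm{Sym}^2\pi_f)$ and the dependence on $N_0,N_1=\sqrt{N/N_0}$ yields the term $N^{1-6\delta_2}N_0^{-2\delta_2}$; the non-square $n$ contribute, after Poisson, an average of twisted $\operatorname{GL}_2$ central values $L(1/2,\pi_f\otimes\chi_{d})$ of conductor $Nd^2$, bounded by \eqref{GL2lindsub}, which similarly yields the term $N^{9/4-7\delta_1}N_0^{7/4-5\delta_1}$. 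Altogether one gets an asymptotic $\sum_{(d,2N)=1}L(1/2,\pi_f\otimes\chi_d)\,a_d\,\Phi(d/X)=c'X+O_{\ell}\!\big(N^{\varepsilon}R(X)\big)$ with $c'\gg_{\ell}N^{-\varepsilon}$, where $R(X)=o_{\ell}(X)$ precisely once $X$ exceeds a fixed constant times the right-hand side of \eqref{discbound}.

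Finally, if $L(1/2,\pi_f\otimes\chi_d)=0$ for every fundamental discriminant $d$ with $(d,2N)=1$ and $1\le d$ at most (a small constant times) the right-hand side of \eqref{discbound}, then choosing $X$ of that size makes the left-hand side above vanish while $c'X\gg_{\ell}N^{-\varepsilon}X$ dominates $O_{\ell}(N^{\varepsilon}R(X))$, a contradiction; since $\Phi$ is supported in $[1,2]$ this forces some fundamental $d_0\in[X,2X]$ with $(d_0,2N)=1$ and $L(1/2,\pi_f\otimes\chi_{d_0})\ne 0$, giving \eqref{discbound}. The $\varepsilon$-ineffectivity is inherited from the possible Siegel zero of a real Dirichlet $L$-function entering the lower bounds in the error analysis. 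The main obstacle is twofold: (i) making the meromorphic continuation of $Z(s,w)$ to a neighbourhood of $(1/2,1)$ uniform in $N$, with polynomial growth in the vertical variables; and (ii) converting the two subconvexity inputs into the exact exponents of \eqref{discbound}. Because the method intrinsically loses a square root --- even under Lindel\"of one only reaches $|d_0|\ll (NN_0)^{1/2+\varepsilon}$ --- the bound beats the unconditional $(\ell N)^{1+\varepsilon}$ only when $\delta_1>5/28$ and the squarefree part $N_0$ is sufficiently small in terms of $\delta_1$, i.e.\ in the sub-Weyl range.
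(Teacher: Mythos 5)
Your proposal follows the paper's strategy essentially exactly: build the (Chinta--Gunnells-weighted) multiple Dirichlet series, extract the $L(1,\mathrm{Sym}^2\pi_f)$ main term from the residue at the $w=1$ polar hyperplane, apply the $\gamma_1,\gamma_2$ functional equations (which the paper itself identifies with the approximate functional equation and Poisson summation in $d$), and bound the resulting dual ranges with the two subconvexity hypotheses after balancing at $R\asymp X\sqrt{NN_0}$. Two details are worth flagging: the mechanism you leave implicit but which actually produces the $N_0$-dependent exponents is the conductor drop $\chi_d(-N)=\chi_d(-N_0)$ in the root number (realized in the MDS framework through the scattering-matrix entry $\Phi_{11}(1/2-s;\pi_f)_{11a_2'c_2'}$ being supported on $c_2'\mid N_0$ and decaying like $(N_0/c_2')^{-1+s}$), and the paper's $\varepsilon$-ineffectivity is inherited from the Hoffstein--Lockhart lower bound $L(1,\mathrm{Sym}^2\pi_f)\gg_{\ell,\varepsilon}N^{-\varepsilon}$ applied to the main term, not from a Siegel zero of a real Dirichlet $L$-function in the error analysis (those errors are governed by the assumed, effective, subconvexity bounds).
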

\begin{remark*}
~\begin{itemize}
\item Theorem \ref{mainthm} gives a power saving improvement over \cite{HK} when $N$ has 
squarefree part 
\begin{equation*}
N_0 \ll N^{(7 \delta_1-5/4)/(7/4-5 \delta_1)+o(1)}.
\end{equation*}
Thus Theorem \ref{mainthm} requires a strong input: $5/28=0.178 \ldots<\delta_1<1/4$ and $\delta_2>0$.
\item The limiting case $\delta_1=1/4$ (i.e the generalised Lindel\"{o}f hypothesis)
gives a power saving improvement as soon as 
\begin{equation*}
1<N_0 \ll N^{1-o(1)}.
\end{equation*}
\item Michel and Venkatesh \cite{MV} solved the uniform subconvexity problem on 
$\operatorname{GL}_2(\mathbb{A}_{\mathbb{Q}})$. However, we are far away from exhibiting the 
sub-Weyl exponent needed for Theorem ~\ref{mainthm}. Subconvexity of the symmetric square is another
major open problem.
\item It is likely that an improved Heath--Brown quadratic large sieve \cite{HB}
 for modular $L$-functions would also yield similar results.
\item Existence of certain $d$ satisfying \eqref{nonvanish} can be ruled out by root number considerations.
Suppose $N$ were a perfect square in Theorem \ref{mainthm} and $\pi_f$ has root number $\varepsilon(\pi_f)=-1$.
Then $\varepsilon(\pi_f \otimes \chi_d)=\varepsilon(\pi_f) \chi_d(-N)=-1$ for all fundamental discriminants $d>0$ satisfying $(d,2N)=1$.
Then all such central values $L(1/2, \pi_f \otimes \chi_d)$ vanish.
\item Let $F$ be a number field. Problems concerning the existence of $d$ (and infinitude of such $d$) 
such that \eqref{nonvanish} holds for a fixed cuspidal automorphic representation $\pi$ of
$\operatorname{GL}_2(\mathbb{A}_F)$ was resolved by Friedberg and Hoffstein in \cite{FH}.
This was built on a host of works \cite{BFH1,BFH2,Iw,MM}. Ono and Skinner \cite{OS} 
give lower bounds on the number fundamental discriminants $0<|d| \leq X$ such that \eqref{nonvanish}
holds for a fixed holomorphic $\pi$ on $\operatorname{GL}_2(\mathbb{A}_{\mathbb{Q}})$.
\end{itemize}
\end{remark*}

Let $E/\mathbb{Q}$ be an elliptic curve with Weierstrass equation 
$y^2=x^3+ax+b$, having conductor $N$. Let $E^{(d)}/\mathbb{Q}$ 
define the twisted curve given by the equation
$dy^2=x^3+ax+b$. An immediate consequence of Theorem \ref{mainthm}, the
modularity theorem \cite{BCDT}, and a result of Kolyvagin \cite{KVA} is

\begin{corollary} \label{elliptic}
Let $N=N_0 N_1^2$ be an odd cubefree integer and
$E/\mathbb{Q}$ be a twist minimal elliptic curve of conductor $N$.
Suppose $L(s, \pi_E \otimes \chi_d)$ satisfies \eqref{GL2lindsub} for all fundamental discriminants $d$,
and that $L(s,\emph{Sym}^2 \pi_E)$ satisfies \eqref{symsub}. 
Then there exists a fundamental
discriminant $d_0$ satisfying the conditions in \eqref{discbound}
such that $E^{(d_0)}$ has Mordell--Weil rank equal to zero.
\end{corollary}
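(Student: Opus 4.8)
To prove Corollary~\ref{elliptic} I would first dispatch the easy reduction to Theorem~\ref{mainthm}: modularity~\cite{BCDT} realises $L(s,\pi_E\otimes\chi_d)$ as the $L$-function of the twist-minimal weight-$2$ newform attached to $E^{(d)}$, the conductor $N=N_0N_1^2$ and the hypotheses~\eqref{GL2lindsub}, \eqref{symsub} transfer verbatim, and a theorem of Kolyvagin~\cite{KVA} converts $L(1/2,\pi_E\otimes\chi_{d_0})\neq 0$ into $\operatorname{rank}E^{(d_0)}(\Q)=0$. So the substance is Theorem~\ref{mainthm}, which I plan to prove by exhibiting a smoothed first moment of central values over a thin family of quadratic twists, computing its main term, and invoking~\eqref{GL2lindsub} and~\eqref{symsub} to beat the error; once the moment is shown to be nonzero, a nonvanishing twist of the required size occurs term by term.

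First I would fix a smooth $w_0\geq 0$ supported in $[1,2]$ with $\int w_0>0$ and a residue class $a\bmod 8N$, $(a,2N)=1$, for which $\varepsilon(\pi_f\otimes\chi_d)=+1$ whenever $d\equiv a\bmod 8N$ is a positive fundamental discriminant; since this root number depends only on $d\bmod 8N$ (it equals $\varepsilon(\pi_f)$ times a Kronecker symbol), such a class exists unless $N$ is a perfect square and $\varepsilon(\pi_f)=-1$ — the degenerate case of the Remark, in which the problem has no solution. For such $d$ one has $\mathfrak c(\pi_f\otimes\chi_d)=Nd^2$, and Waldspurger's formula~\cite{Wal} (in the Kohnen--Zagier/Guo shape) gives $L(1/2,\pi_f\otimes\chi_d)\geq 0$. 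It then suffices to show that
\[
M(X):=\sum_{\substack{d\equiv a\ (8N)\\ 0<d\ \mathrm{fund.}}}L(1/2,\pi_f\otimes\chi_d)\,w_0(d/X)
\]
is nonzero for $X$ a large constant multiple of the right-hand side of~\eqref{discbound}, since then some $d_0\asymp X$ with $(d_0,2N)=1$ satisfies $L(1/2,\pi_f\otimes\chi_{d_0})\neq 0$.

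Next I would open each $L(1/2,\pi_f\otimes\chi_d)$ by the approximate functional equation into a smoothed sum of length $\asymp\sqrt N\,d$ in $\lambda_f(n)\chi_d(n)$, swap the $n$- and $d$-sums, and Poisson-sum the progression of $d$ modulo (essentially) $8N_0n$; equivalently, $M(X)=\frac1{2\pi i}\int_{(2)}Z(1/2,w;\pi_f)\,\widetilde{w_0}(w)\,X^w\,dw$ for the Hoffstein--Kontorovich double Dirichlet series $Z(s,w;\pi_f)=\sum_d L(s,\pi_f\otimes\chi_d)\,P(s,d)\,d^{-w}$, whose meromorphic continuation past $(1/2,1)$ is available~\cite{HK}. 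This $Z$ carries the functional equation $s\leftrightarrow 1-s$, which because of the conductor $Nd^2$ also sends $w\mapsto w+2s-1$, together with $w\leftrightarrow 1-w$ coming from Poisson in $d$. Moving the $w$-contour to the left of $\Re w=1$, the pole at $w=1$ produces the main term; computing its residue shows $\mathcal M(X)\asymp X\cdot L(1,\operatorname{Sym}^2\pi_f)$ times Euler factors at $2N$, so $\mathcal M(X)\gg_{\ell,\varepsilon}X\,N^{-\varepsilon}$ by the ineffective Hoffstein--Lockhart lower bound — this is the sole source of the ineffectivity in~\eqref{discbound}, as that bound rules out a Siegel zero of $\operatorname{Sym}^2\pi_f$ only ineffectively.

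It remains to dominate the error with $\mathcal M(X)$, and here the two hypotheses enter. The tail $w$-integral, after being reflected through the two functional equations, has its size on $\Re w=1/2$ governed by $L(1/2+it,\pi_f\otimes\chi_d)$, so~\eqref{GL2lindsub} with $\mathfrak c(\pi_f\otimes\chi_d)=Nd^2$ — once the ramified Euler factors and the split $N=N_0N_1^2$ are accounted for — bounds this contribution by $N^\varepsilon X$ provided $X\gg N^{9/4-7\delta_1}N_0^{7/4-5\delta_1}$, the first term of~\eqref{discbound}. The same contour shift also meets a secondary polar divisor of $Z(s,w;\pi_f)$ — morally the $w\mapsto 1-w$ reflection of the pole at $w=1$ — whose residue is, up to tame factors, an $L$-value of $\operatorname{Sym}^2\pi_f$ on its critical line; estimating it via~\eqref{symsub}, with $\mathfrak c(\operatorname{Sym}^2\pi_f)$ read off from the local components of $\pi_f$ at the primes dividing $N_0$ and $N_1$, gives a contribution $\ll N^\varepsilon N^{1-6\delta_2}/N_0^{2\delta_2}$, the second term of~\eqref{discbound}. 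Taking $X$ equal to $N^\varepsilon$ times the right-hand side of~\eqref{discbound} then makes $M(X)=\mathcal M(X)+O(\mathrm{error})\neq 0$, which completes the argument.

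The hard part will be running all of this uniformly over a \emph{non-squarefree} level. Twist-minimality of $f$ and the cubefree shape $N=N_0N_1^2$ have to be used to pin down the conductors $\mathfrak c(\pi_f\otimes\chi_d)=Nd^2$ and $\mathfrak c(\operatorname{Sym}^2\pi_f)$, the ramified local root numbers, and the precise Gauss sums produced by the Poisson summation in $d$; it is exactly this bookkeeping that is responsible for the asymmetric exponents in $N$ and $N_0$ in~\eqref{discbound} and for the constraint on $N_0$ in the Remark. A second, structural point is that one only improves on the Hoffstein--Kontorovich bound $|d|\ll N^{1+\varepsilon}$ in the sub-Weyl range: the first term of~\eqref{discbound} falls below $N^{1-\eta}$ for some $\eta>0$ and some $N_0>1$ precisely when $7\delta_1-5/4>0$, i.e.\ $\delta_1>5/28$, which is why a genuinely sub-Weyl exponent is indispensable.
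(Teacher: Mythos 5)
Your reduction of the Corollary to Theorem~\ref{mainthm} is exactly the paper's one-line argument: modularity~\cite{BCDT} to get $\pi_E$, Theorem~\ref{mainthm} to produce a small $d_0$ with $L(1/2,\pi_E\otimes\chi_{d_0})\neq 0$, and Kolyvagin~\cite{KVA} to conclude rank zero. That part is fine, and if you had stopped there the review would be over. The issue is the sketch of Theorem~\ref{mainthm} that follows, which contains a genuine gap in how it tries to set up the moment.

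You define $M(X)=\sum_{d\equiv a\,(8N),\ d\ \mathrm{fund.}}L(1/2,\pi_f\otimes\chi_d)\,w_0(d/X)$, a sum over \emph{fundamental discriminants in a fixed residue class}, and then assert it is ``equivalently'' $\frac{1}{2\pi i}\int_{(2)}Z(1/2,w;\pi_f)\widetilde{w_0}(w)X^w\,dw$ for the Hoffstein--Kontorovich series $Z(s,w;\pi_f)=\sum_d L(s,\pi_f\otimes\chi_d)P(s,d)d^{-w}$. These two objects are not the same Dirichlet series: the multiple Dirichlet series runs over \emph{all} $d=d_0d_1^2$ with $(d,2N)=1$, with the correction polynomials $\mathcal{P}_d$ attached precisely so that the $p$-part transforms correctly under the $\gamma_1$ functional equation; it is not a sieve to squarefree (let alone fundamental) $d$, and it is not restricted to a residue class. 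To run the argument starting from a sum over fundamental discriminants you would need to M\"obius-invert the squarefree condition (as Petrow does), which the paper deliberately avoids. The paper's moment $M_{\pi_f}(X)$ in \eqref{moment} therefore keeps the weights $\mathcal{P}_d$ and all $d=d_0d_1^2$; since $T_{\pi_f}(X)\gg X^{1/2}N^{-\varepsilon}$ dominates the error, some term is nonzero, and that single term already gives a fundamental $d_0$ with $L^{(2N)}(1/2,\pi_f\otimes\chi_{d_0})\neq 0$. Consequently the Waldspurger positivity you invoke is not needed for the nonvanishing step, and the restriction to a residue class with $\varepsilon(\pi_f\otimes\chi_d)=+1$ is incompatible with the series $Z$ as actually used; the paper instead sees the root-number obstruction through the factor $1+\delta_{N=\square}\varepsilon(\pi_f)$ that emerges from $\Phi_{11}(1/2;\pi_f)_{1111}$ in the second main term. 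Your account of the conductor drop (Poisson modulus $\sim N_0 n$ rather than $Nn$, and $\mathfrak{c}(\mathrm{Sym}^2\pi_f)=N_0^2N_1^3$) and of the Hoffstein--Lockhart source of ineffectivity is consistent with the paper, but the ``equivalently'' step above is the point at which the sketch would break if carried out in detail.
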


Note that primes $p \mid N_0$ (resp. $p \mid N_1$) correspond to primes of multiplicative reduction
(resp. additive reduction) for $E/\mathbb{Q}$. Thus the savings obtained in 
Corollary \ref{elliptic} are governed  
by the reduction types of bad primes.  

Impressive work of Petrow \cite{Pet} conditionally establishes (under GRH)
 the existence of a
fundamental discriminant $d_0$
satisfying 
\begin{equation*}
 0<d_0 \ll_A \frac{N \ell}{( \log N \ell )^{A}}, \quad \varepsilon(\pi_f \otimes \chi_{d_0})=-1,
  \quad \text{and} \quad (d_0,2N)=1,
 \end{equation*}
 such that the derivative $L^{\prime}(1/2,\pi_f \otimes \chi_{d_0}) \neq 0$. It is expected
 that the methods of this paper would carry over to that situation as well.

We close by mentioning that a power saving improvement on the ``convexity" bound
 $|d| \ll_{\varepsilon} (\ell N)^{1+\varepsilon}$
in the weight aspect would have interesting applications to non-vanishing of certain 
$\operatorname{GL}_3 \times \operatorname{GL}_2$ $L$-functions. For this, 
one may consult work of Liu and Young \cite{LY}.
 
 \subsection{Heuristics and outline}  \label{sketch}
 Here we outline the main ideas in this paper, ignoring most technicalities (i.e. 
the presence of smooth functions).
We remind the reader 
that $\ell$ is an arbitrary but fixed positive even integer, and that $N$ is an odd cubefree
integer allowed to move and has factorisation \eqref{factorise}.

For now we ignore the requirement that $d$ be a fundamental discriminant 
(it will be addressed in momentarily). 

In order to obtain Theorem \ref{mainthm}, 
one could try to prove an asymptotic formula
\begin{equation} \label{heuristicmom}
\sum_{d \sim X } \frac{1}{d^{1/2}} L(1/2,\pi_f \otimes \chi_d)=T_{\pi_f}(X)+E_{\pi_f}(X),
\end{equation}
where $T_{\pi_f}(X)$ and $E_{\pi_f}(X)$ are the main and error terms 
respectively. Typically $T_{\pi_f}=c_{\pi_f} \cdot X^{1/2+o(1)}$ 
where $|c_{\pi_f}| \gg_{\varepsilon} N^{-\varepsilon}$. If 
$T_{\pi_f}(X)$ dominates the error term for some range, 
then the existence of a $d \sim  X$ satisfying \eqref{nonvanish}
would immediately follow.
 
Each $L$-function on the left side of \eqref{heuristicmom}
has conductor $\asymp_{\ell} X^2 N$. Let
$1 \leq R \ll_{\ell} X^2 N$ be a paramater chosen later.
We use the unbalanced approximate functional equation \cite[Theorem~5.3]{IK}
to open each summand in \eqref{heuristicmom} (this is morally the same as applying Voronoi summation).
Interchanging the order of summation gives
\begin{align} \label{coreexp}
\sum_{d \sim X } \frac{1}{d^{1/2}} L(1/2,\pi_f \otimes \chi_d) & \approx 
\sum_{1 \leq n \ll R} \frac{\lambda_f(n)}{n^{1/2}} 
\sum_{\substack{d \sim X \\ (d,2N)=1 }} \frac{\chi_d(n)}{d^{1/2}}  \nonumber \\
&+ \varepsilon(f) \sum_{1 \leq n \ll X^2 N/R} \frac{\lambda_f(n)}{n^{1/2}} 
\sum_{\substack{d \sim X \\ (d,2N)=1}} \frac{\chi_d(-Nn)}{d^{1/2}}, 
\end{align}
where $\varepsilon(\pi_f \otimes \chi_d)=\varepsilon(f) \chi_d(-N)$
is the root number of  $L(1/2,\pi_f \otimes \chi_d)$.

Applying Poisson summation
to the first (resp. second) $d$ summation in  gives a dual sum
whose length is $|d| \ll_{\ell} R/X$ (resp. $|d| \ll_{\ell} XN^2/ R$). In order to gain from this move 
we would need $X \gg_{\ell} N$. 
This is a deadlock for our problem.

To circumvent this we use the conductor drop coming from the factorisation
 $N=N_0 N_1^2$. Observe that $\chi_d(-N)=\chi_d(-N_0 N_1^2)=\chi_d(-N_0)$,
 and repeating the above Poisson step, we only need $X \gg (N N_0)^{1/2}$
 to shorten both summations. 
 
 Post Poisson, one would expect the main terms to come from the zero frequencies 
 from both sums, so we ignore these terms for now. 
 Subconvexity of the symmetric square \eqref{symsub} ensures the error incurred 
 from the contour shifting required for main term extraction is acceptable. We point 
 out that \cite[Example~1]{Li2} gives $\mathfrak{c}(\text{Sym}^2 \pi_f)=N_0^2 N_1^3$ (as opposed
 to the full $N^2$), and
 so we also benefit from this conductor drop.

 We choose $R:=X(NN_0)^{1/2}$ to balance the lengths of both $d$ summations. 
Interchanging the orders of summation post Poisson 
and application of the hypothesis \eqref{GL2lindsub} would in 
theory yield a result. In reality, there is a conductor raising penalty incurred
by the M\"{o}bius 
inversion of the condition $(d,2N)=1$ (equivalent to $(d,2 \mathsf{rad}(N))=1$),
and this is handled in the endgame calculation.
 
Unfortunately not all integers are fundamental discriminants, and to make the above approach rigorous 
we use certain 
combinatorial weights $\mathcal{P}_d(1/2;\pi_f)$ \cite{BFH,CG1,CG2,Di} ($\mathcal{P}_d(s;\pi_f)$
is Dirichlet polynomial) coming from the 
theory of multiple Dirichlet series. We
consider the perturbed moment 
\begin{equation*}
\sum_{\substack{d=d_0 d_1^2 \sim X \\ (d,2N)=1 \\ \mu^2(d_0)=1 }}
 \mathcal{P}_d(1/2;\pi_f)  L(1/2,\pi_f \otimes \chi_{d_0}).
\end{equation*}

Mellin inversion and other standard moves bring into play a multiple Dirichlet series
$Z(s,w;\pi_f)$. For $(s,w) \in \mathbb{C}^2$ 
in an appropriate region of absolute convergence,
\begin{equation*}
Z(s,w;\pi_f) \approx \sum_{\substack{ d \geq 1\\ d=d_0 d_1^2 \\ (d,2N)=1}} \frac{\mathcal{P}_d(s;\pi_f)  
L(s,\pi_f \otimes \chi_{d_0}) }{d^w},
\end{equation*}
and for another region,
\begin{equation*}
Z(s,w;\pi_f) \approx \sum_{\substack{ n \geq 1\\ n=n_0 n_1^2 \\ (n,2N)=1}} \frac{\mathcal{Q}_n(w;\pi_f)  
L(w,\widetilde{\chi}_{n_0} \chi_N)}{n^s},
\end{equation*}
where the $\mathcal{Q}_n(w;\pi_f)$ are certain combinatorial weights.

The series $Z(s,w;\pi_f)$ has finite group of functional equations
isomorphic to the dihedral group of order $8$ (Weyl group of the root system $C_2$)
with generators
\begin{equation*}
\gamma_1(s,w)=(1-s,w+2s-1) \quad \text{and} \quad \gamma_2(s,w)=(s+w-1/2,1-w).
\end{equation*}
Consequently $Z(s,w;\pi_f)$
has full meromorphic continuation to $\mathbb{C}^2$ with well
understood polar hyperplanes.

The notion of a ``convexity" bound for $Z(s,w;\pi_f)$ is \emph{a priori} not well defined. 
It depends on what is assumed about each summand (both $L(s,\pi_f \otimes \chi_{d_0})$
and $L(w,\widetilde{\chi}_n \chi_N)$)
in various regions of absolute convergence for $Z(s,w;\pi_f)$. If one assumes the full Lindel\"{o}f 
hypothesis for each $L$-function (in the $d$ and $N$-aspects), this would give  
\begin{equation*}
Z(1/2,1/2+i t;\pi_f) \ll_{\ell,\varepsilon,A}  N^{1/2+\varepsilon} (1+|t|)^{A},
\end{equation*}
for some $A>0$ (see \cite[Proposition~3.20]{HK}). 
A Mellin inversion and contour shifting argument to the half line as 
in \cite[Remark~4.2]{HK} would yield the bound 
$|d| \ll_{\ell,\varepsilon} N^{1+\varepsilon}$ mentioned in the introduction. Thus it is natural
to ask whether one can conditionally go beyond this bound, as we have done in this paper.

A differing school of thought (\cite{Blo} and \cite{Dah}) is that the 
``convexity" bound should correspond to  
 a Lindel\"{o}f-on-average bound in the $d$-aspect, and the 
convexity bound for the other parameters, in the regions of absolute convergence.
The advantage of this regime is that these assumptions can be 
established unconditionally using Heath-Brown's quadratic large sieve
\cite{HB}.
In this weaker setting, 
Blomer \cite{Blo} established ``subconvex" bounds for 
$\operatorname{GL}_1$ multiple Dirichlet series in the $t$-aspect, and his student Dahl 
\cite{Dah} for the same series in the level aspect.

The approximate functional equation/Voronoi move in 
our heuristic is mimicked by application of the functional equation
corresponding to $\gamma_1$. Similarly, the Poisson step 
is mimicked by application of $\gamma_2$.
Blomer observed \cite{Blo} the equivalence between Poisson and $\gamma_2$, 
and we make crucial use of this 
insight in this paper.

An alternative approach could be to M\"{o}bius invert the squarefree
condition as in \cite{Pet}. However such an involved computation is not necessary
given that we are only concerned with non-vanishing, and not the full moment sieved
to fundamental discriminants.

The global descriptions of the combinatorial weights we use 
were first discovered by Bump, Friedberg and Hoffstein \cite[Theorem~1.2]{BFH} 
using brute force methods. We choose to build our weights locally using work
of Chinta and Gunnells \cite{CG1,CG2} and Diaconu \cite{Di}. One pleasing novelty 
of this approach is how the algebraic structure of these weights naturally give 
each term in the Euler product of $L(s,\text{Sym}^2 \pi_f)$ (see Lemma \ref{R1}).
The main term of the first moment involves the constant $L(1,\text{Sym}^2 \pi_f)$.

The methods in this paper could probably be extended to the 
$\operatorname{GL}_r(\mathbb{A}_{\mathbb{Q}})$ cases for $r=1,3$.

Section \ref{preliminaries} contains basic $L$-functions facts. Section \ref{constrmds}
records the relevant multiple Dirichlet series we use and a careful derivation of the scattering 
matrices that appear in the functional equations for them. Section \ref{mainargument}
makes the Voronoi and Poisson heuristic above rigorous and Section \ref{endgame} 
contains the proof of Theorem \ref{mainthm}.

This work is intended be a pleasant interaction between the multiple Dirichlet series 
and approximate functional equation perspectives, and the equivalences between them.

\section*{Acknowledgements}
I warmly thank Jeffrey Hoffstein for introducing me to his open question 
and for many fruitful discussions. I am also grateful to Maksym Radziwi\l{}\l{} and Matthew Young
for their helpful comments, and Henri Darmon for an insight regarding root numbers
(communicated via Maksym Radziwi\l{}\l{}).  I would also like to thank Alex Kontorovich 
for his helpful feedback on my manuscript.

\section*{Conventions}
Unless otherwise stated, the implied constants are allowed to depend on $\ell$
and an arbitrarily small constant $\varepsilon>0$. The quantity $\varepsilon>0$
may differ in each instance it appears.

\section{Preliminaries} \label{preliminaries}
\subsection{$L$-functions}
Let the Fourier expansion of $f \in S_{\ell}^{\text{new}}(N)$
be given by 
\begin{equation} \label{fourier}
f(\tau):=\sum_{n=1}^{\infty} \lambda_f(n) n^{(\ell-1)/2} e^{2 \pi i n \tau}  
\in S^{\text{new}}_{\ell}(N), \quad \lambda_f(1)=1, \quad \tau \in \mathbb{H}.
\end{equation}

The $L$-function attached to $\pi_f$ is 
\begin{equation} \label{Lfunction}
L(s,\pi_f):=\sum_{n=1}^{\infty} \frac{\lambda_f(n)}{n^s}=\prod_{p \nmid N} 
\Big(1-\frac{\lambda_f(p)}{p^{s}}+\frac{1}{p^{2s}} \Big)^{-1}  \prod_{p \mid N}
\Big(1-\frac{\lambda_f(p)}{p^{s}} \Big)^{-1}, \quad \Re s>1.
\end{equation}
Each Euler factor on the far right side of \eqref{Lfunction} is denoted $L(s,\pi_{f,p})$.
The $L$-function $L(s,\pi_{f})$ has analytic continuation to all 
of $\mathbb{C}$, and satisfies the functional equation 
\cite[Theorem~14.17]{IK}
\begin{equation*}
\Lambda(s,\pi_{f})=\varepsilon(\pi_f) \Lambda(1-s,\widetilde{\pi}_{f}),
\end{equation*}
where $\varepsilon(\pi_f)$ is the root number ($|\varepsilon(\pi_f)|=1$), $\widetilde{\pi}_{f}$ denotes the 
contragredient, and 
\begin{equation*}
\Lambda(s,\pi_f):=\mathfrak{c}(\pi_f)^{s/2} \pi^{-s} \Gamma \Big( \frac{s+\frac{\ell-1}{2}}{2} \Big) 
\Gamma \Big( \frac{s+\frac{\ell+1}{2}}{2} \Big) L(s,\pi_f).
\end{equation*}
Since $f$ has trivial nebentypus, $\pi_f \cong \widetilde{\pi}_f$ ($\pi_f$ is self-contragredient).

Let $\chi$ be a primitive Dirichlet character with conductor $Q$, and 
\begin{align*}
L(s,\pi_f \otimes \chi)&:=\sum_{n=1}^{\infty} \frac{\lambda_f(n) \chi(n)}{n^s} \\
&=\prod_{p \nmid N} 
\Big(1-\frac{\lambda_f(p) \chi(p)}{p^{s}}+\frac{\chi^2(p)}{p^{2s}} \Big)^{-1}  \prod_{p \mid N}
\Big(1-\frac{\lambda_f(p) \chi(p)}{p^{s}} \Big)^{-1}, \quad \Re s>1.
\end{align*}
If $(N,Q)=1$, then by \cite[Proposition~14.20]{IK} we have
\begin{equation} \label{functionaleq2}
\Lambda(s,\pi_{f} \otimes \chi)=\varepsilon(\pi_f \otimes \chi) \Lambda(1-s,\widetilde{\pi_{f} \otimes \chi}),
\end{equation}
with $\mathfrak{c}(\pi_f \otimes \chi)=NQ^2$ and root number
\begin{equation} \label{twistroot1}
\varepsilon(\pi_f \otimes \chi)=\varepsilon(\pi_f) \chi(N) g^2_{\chi},
\end{equation}
where $g_{\chi}$ is the normalised Gauss sum attached to $\chi$.  
A more convenient formula is 
\begin{equation} \label{twistroot2}
\varepsilon(\pi_f \otimes \chi_d)=\varepsilon(\pi_f) \chi_d(-N).
\end{equation}
If $\varepsilon(\pi_f \otimes \chi_d)=-1$ then $L(1/2,\pi_f \otimes \chi_d)=0$.

\subsection{Dirichlet $L$-functions}
Let $\chi$ be a character modulo $Q$.
The Dirichlet $L$-function 
\begin{equation*}
L(w,\chi):=\sum_{n=1}^{\infty} \frac{\chi(n)}{n^{w}}, \quad \Re(w)>1,
\end{equation*}
has meromorphic continuation to all of $\mathbb{C}$. It has simple pole at $w=1$ when 
$\chi=\mathbf{1}$ is the principal character modulo $Q$ and is entire if $\chi \neq \mathbf{1}$. 

It $\chi$ is primitive modulo $Q$ then 
we have the functional equation 
\begin{equation} \label{dirtwistfunc}
L(w,\chi)= \frac{g_{\chi}}{i^{\mathfrak{a}}} 
\Big(\frac{Q}{\pi} \Big)^{1/2-w}
\frac{\Gamma \big(\frac{1-w+\mathfrak{a}}{2} \big)}{\Gamma \big( \frac{w+\mathfrak{a}}{2} \big)} 
L(1-w,\overline{\chi}), \quad w \in \mathbb{C},
\end{equation}
where 
\begin{equation*}
\mathfrak{a}=\begin{cases}
0, & \quad \chi(-1)=1 \\
1, &  \quad \chi(-1)=-1.
\end{cases} 
\end{equation*}

\subsection{Real characters}
Following \cite{DGH} and \cite{Blo} we will use a slightly different notation for 
real characters in the remainder of the paper. 

Let $d$ and $n$ be odd positive integers with factorisations
\begin{equation} \label{nd}
d=d_0 d_1^2 \quad \text{and} \quad n=n_0 n_1^2, \quad \text{where} \quad  \mu^2(d_0)=\mu^2(n_0)=1.
\end{equation}
Write 
 \begin{equation*}
 \chi_d(n):=\Big( \frac{d}{n}  \Big)=\widetilde{\chi}_n(d).
 \end{equation*}
 The character $\chi_d$ is the Jacobi--Kronecker symbol of conductor $d_0$ if
 $d \equiv 1 \pmod{4}$ and $4d_0$ if $d \equiv 3 \pmod{4}$. We have $\chi_{d}(-1)=1$, so $\chi_d$ is even.
  By quadratic reciprocity 
 we have 
 \begin{equation} \label{quadrep}
 \widetilde{\chi}_n=\begin{cases}
 \chi_n, & \quad n \equiv 1 \pmod{4} \\
 \chi_{-n}, & \quad n \equiv 3 \pmod{4}.
 \end{cases}
 \end{equation}
  
 For $a \in \{\pm 1, \pm 2\}$, let $\chi_a$ denote the four characters modulo $8$.
That is,
$\chi_1$ is the trivial character, $\chi_{-1}$ is induced from the 
non-trivial character modulo $4$,
$\chi_2=1$ if and only if $n \equiv 1,7 \pmod{8}$ and $\chi_{-2}(n)=1$ if and only if 
$n \equiv 1,3 \pmod{8}$. 

All real primitive characters can be constructed from products $\chi_{d_0} \chi_{a}$
with $d_0$ odd and squarefree and $a \in \{ \pm 1, \pm 2 \}$.

In the body of the paper we will also 
require the $d$ from \eqref{nd} to satisfy $(d,2N)=1$. 
Let 
\begin{equation*}
\text{Div}(N):=\{a \cdot c : a \in \{\pm 1, \pm 2 \}  \text{ and } c \mid \mathsf{rad}(N) \},
\end{equation*}
where $\mathsf{rad}(m):=\prod_{p \mid m} p$ 
denotes the usual radical of an integer $m$.

When working with multiple Dirichlet series,
we write primitive real Dirichlet characters using 
the convention
\begin{equation} \label{dircond}
\chi_{d_0} \chi_{ac} \quad \text{where}  \quad (d_0,2N)=1, \quad \text{and} \quad
ac \in \text{Div}(N).
\end{equation}

\subsection{Subconvexity hypotheses}
Let $N$ be as in \eqref{factorise}, $f \in S^{\text{new}}_{\ell}(N)$ be twist minimal,  
$a c \in \text{Dic}(N)$, and $d_0$ squarefree with $(d_0,2N)=1$.
Then we have 
$\mathfrak{c}(\pi_f \otimes \chi_{d_0} \chi_{ac}) \mid 8 Nc d_0^2$ by \cite[Proposition~3.1]{AL},.
Thus we can relax the subconvexity hypothesis \eqref{GL2lindsub} becomes
\begin{equation} \label{GL2lindelof}
L(1/2+it,\pi_f \otimes \chi_{d_0} \chi_{ac}) \ll_{\ell,A,\varepsilon}  (Ncd_0^2)^{1/4-\delta_1+\varepsilon} (1+|t|)^{A}.
\end{equation}

We have $\mathfrak{c}(\text{Sym}^2 \pi_f)=N_0^2 N_1^3$ by \cite[Example~1]{Li2}, 
and so the
subconvexity hypothesis \eqref{symsub} becomes
\begin{equation} \label{GL3lindelof}
L(1/2+it,\text{Sym}^2 \pi_f)  \ll_{\ell,A,\varepsilon} (N_0^2 N_1^3)^{1/4-\delta_2+\varepsilon} (1+|t|)^{A},
\end{equation}
for some $A>0$. 

A consequence of \eqref{GL2lindelof}, dyadic partition of unity, Mellin inversion and a 
trivial estimation of Euler factors attached to primes $p \mid 2N$
is the following
\begin{lemma} \label{lindelof}
Suppose $f \in S^{\emph{new}}_{\ell}(N)$ satisfies \eqref{GL2lindelof} and has
Fourier expansion \eqref{fourier}.
Then for $ac \in \emph{Div}(N)$, $d_0$ squarefree with $(d_0,2N)=1$,
 $Q \geq 1$, $t \in \mathbb{R}$ and $\varepsilon>0$ we have 
\begin{equation*}
\bigg| \sum_{\substack{1 \leq n \leq Q \\ (n,2N)=1}} 
\frac{\lambda_f(n) \chi_{a c}(n) \chi_{d_0}(n)}{n^{1/2+it}} \bigg | 
 \ll_{\ell,A,\varepsilon}   Q^{\varepsilon} (Ncd_0^2)^{1/4-\delta_1+\varepsilon} (1+|t|)^{A},
\end{equation*}
for some $A>0$.
\end{lemma}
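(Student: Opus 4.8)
The plan is to deduce the partial-sum bound in Lemma~\ref{lindelof} from the subconvex bound \eqref{GL2lindelof} for the completed $L$-function $L(s,\pi_f \otimes \chi_{d_0}\chi_{ac})$ by a standard Mellin--Perron device, being careful that the object appearing in the lemma is the \emph{truncated Dirichlet series} $\sum_{n,(n,2N)=1} \lambda_f(n)\chi_{ac}(n)\chi_{d_0}(n) n^{-s}$ rather than the $L$-function itself. First I would relate the two: writing $\psi:=\chi_{d_0}\chi_{ac}$, the character $\psi$ is a real (not necessarily primitive) character whose primitive part $\psi^{\sharp}$ has conductor dividing $8Ncd_0^2$, so
\begin{equation*}
\sum_{\substack{n\ge 1\\(n,2N)=1}}\frac{\lambda_f(n)\psi(n)}{n^{s}}
= L(s,\pi_f\otimes\psi^{\sharp})\cdot P(s),
\end{equation*}
where $P(s)=\prod_{p\mid 2N}(\text{local correction factor})$ is a finite Euler product that is holomorphic and $\ll_\varepsilon N^{\varepsilon}(1+|s|)^{O(1)}$ uniformly for $\Re s\ge 1/2$ (each local factor is a polynomial in $p^{-s}$ of bounded degree with Satake parameters of size $\le p^{7/64}$ or, at bad primes, $\le 1$, so the product over $p\mid 2N$ contributes at most $N^\varepsilon$). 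Hence \eqref{GL2lindelof} transfers to the Dirichlet series on $\Re s=1/2$ with the same exponent, up to $N^{\varepsilon}$.

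Next I would extract the partial sum by contour integration. Introduce a smooth dyadic partition of unity $1=\sum_{j}V(n/Q_j)$ with $V$ supported in $[1/2,2]$ and $Q_j$ ranging over powers of $2$, so that
\begin{equation*}
\sum_{\substack{1\le n\le Q\\(n,2N)=1}}\frac{\lambda_f(n)\psi(n)}{n^{1/2+it}}
=\sum_{Q_j\ll Q}\;\frac{1}{2\pi i}\int_{(\sigma)}
\Big(\sum_{\substack{n\ge1\\(n,2N)=1}}\frac{\lambda_f(n)\psi(n)}{n^{1/2+it+u}}\Big)\widehat V(u)\,Q_j^{u}\,du,
\end{equation*}
where $\widehat V$ is the Mellin transform of $V$, which decays faster than any polynomial in vertical strips, and I then shift the $u$-contour to $\Re u=0$. (The only possible pole, at $u=1/2-it$ coming from $\psi=\mathbf 1$, occurs only when $d_0=1$ and $ac$ is a perfect square, in which case $\pi_f\otimes\psi$ is cuspidal anyway if $f$ is cuspidal, so $L(s,\pi_f\otimes\psi^\sharp)$ is entire and there is no pole; more simply one can note $L(s,\pi_f\otimes\psi^\sharp)$ is always entire for cuspidal $f$.) On $\Re u=0$ the inner Dirichlet series is bounded by $\ll_{\ell,A,\varepsilon}(Ncd_0^2)^{1/4-\delta_1+\varepsilon}(1+|t+\Im u|)^A$ by the previous paragraph, the rapid decay of $\widehat V(u)$ absorbs the $(1+|\Im u|)^A$ factor and makes the $u$-integral converge, the factor $Q_j^{u}$ has absolute value $1$, and summing the geometric-type series over the $O(\log Q)$ dyadic points $Q_j\ll Q$ costs only a factor $\log Q\ll Q^\varepsilon$. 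Collecting everything yields
\begin{equation*}
\Big|\sum_{\substack{1\le n\le Q\\(n,2N)=1}}\frac{\lambda_f(n)\psi(n)}{n^{1/2+it}}\Big|
\ll_{\ell,A,\varepsilon} Q^\varepsilon (Ncd_0^2)^{1/4-\delta_1+\varepsilon}(1+|t|)^A,
\end{equation*}
which is the claim.

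The step I expect to be the main (though still routine) obstacle is the bookkeeping at the bad primes $p\mid 2N$: I must check that passing from $\psi$ to its primitive inducing character $\psi^{\sharp}$, and from $L(s,\pi_f\otimes\psi)$ to the truncated Dirichlet series, costs only $N^{\varepsilon}$ and no growth in $d_0$ or $c$. This requires knowing that for $p\mid N$ the local factor $L(s,\pi_{f,p})=(1-\lambda_f(p)p^{-s})^{-1}$ with $|\lambda_f(p)|\le 1$ (indeed $|\lambda_f(p)|=p^{-1/2}$ or $0$ for $p\mid N$ by \eqref{Lfunction} and the theory of newforms), so the removed/added Euler factors are uniformly $O(1)$ in modulus on $\Re s\ge 1/2$, and that the number of such primes is $\ll_\varepsilon N^\varepsilon$; the characters $\chi_{ac}$ have modulus dividing $8\mathsf{rad}(N)$ and so introduce no dependence on $d_0$. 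Everything else --- the Mellin transform, the contour shift, the dyadic sum --- is the textbook Perron-type argument and introduces nothing worse than $Q^\varepsilon$ and the stated $(1+|t|)^A$.
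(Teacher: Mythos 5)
Your proof follows exactly the same path the paper sketches in the one-sentence justification preceding the lemma: relate the truncated Dirichlet series to $L(s,\pi_f\otimes\chi_{d_0}\chi_{ac})$ by trivially bounding the finitely many Euler factors at $p\mid 2N$ (which contribute at most $N^{\varepsilon}$), then invoke the subconvexity hypothesis \eqref{GL2lindelof} together with a smooth dyadic decomposition and Mellin inversion. Your identification of the bookkeeping at the ramified primes as the only nontrivial step is consistent with the paper's phrasing.

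One point worth flagging, though it affects the paper's sketch just as much as yours: the displayed identity
\begin{equation*}
\sum_{\substack{1\le n\le Q\\(n,2N)=1}}\frac{\lambda_f(n)\psi(n)}{n^{1/2+it}}
=\sum_{Q_j\ll Q}\;\frac{1}{2\pi i}\int_{(\sigma)}
\Big(\sum_{\substack{n\ge1\\(n,2N)=1}}\frac{\lambda_f(n)\psi(n)}{n^{1/2+it+u}}\Big)\widehat V(u)\,Q_j^{u}\,du
\end{equation*}
is not literally correct, because a smooth partition $1=\sum_j V(n/Q_j)$ over all $j$ reconstitutes the \emph{full} Dirichlet series, while truncating the $j$-sum at $Q_j\ll Q$ produces a \emph{smoothed} cutoff rather than the sharp one $\mathbf{1}_{n\le Q}$. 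The boundary block near $n\asymp Q$ must be handled separately (e.g.\ by truncated Perron with error terms, or by balancing a thinner transition region against the polynomial $t$-growth of the integrand using the approximate functional equation). This is a standard step and does not change the statement, especially since in the paper's application the sums run over $Q\gg (Ncd_0^2)^{1/2}$, the regime where the conversion is cleanest; but as written your Mellin step produces a smooth sum, not the hard-cutoff sum in the lemma's statement, and a line or two bridging them is needed.
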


\section{Construction of the Multiple Dirichlet Series} \label{constrmds}

The classical approach to multiple Dircihlet series and automorphic forms is well summarised by 
\cite{BFH,BBCFH}. 

Global formulas for the combinatorial weights we need were originally discovered by 
Bump, Friedberg and Hoffstein \cite[Theorem~1.2]{BFH} using brute force methods.
We opt to describe the multiple Dirichlet series locally using the recipe briefly 
described on \cite[pg.~331]{CG1}. Such a description makes clear 
the relationship between the local representations $\pi_{f,p}$ and
the $p$-\emph{part} of the multiple Dirichlet series relevant to our situation.

\subsection{Chinta--Gunnells action in the case $A_3$}
The power of the Chinta--Gunnells construction \cite{CG1,CG2} is that it works for arbitrary simply laced root systems.
For simplicity we focus on the Dynkin diagram $A_3$, which is sufficient for our purposes. A more general summary 
can be found in \cite[Section~2.1]{Di}.

Note that the multiple Dirichlet series we use in Section \ref{mdsdef}
is a two variable specialisation of a three variable series whose group of functional
equations is isomorphic to the Weyl group of $A_3$, denoted $W(A_3)$. Hence the specialised series 
has group of functional equations isomorphic to $W(C_2)$ 
($C_2$ is the Dynkin fold of $A_3$), as mentioned in section \ref{sketch}.

After fixing an ordering of the roots, let $A_3=A_3^{+} \cup A_3^{-}$ denote a decompositon
into positive and negative roots. Let  $\alpha_1,\alpha_2$ and $\alpha_3$ be simple roots 
(where $\alpha_3$ corresponds to the central node of the Dynkin diagram of $A_3$)
and $\sigma_i \in W(A_3)$ be the simple reflection through a hyperplane perpendicular 
to $\alpha_i$. The simple reflections $\sigma_i$ generate the Weyl group and satisfy  
\begin{equation*}
(\sigma_i \sigma_j)^{r_{ij}}=1, \quad \text{with} \quad r_{ii}=1, 
\quad r_{12}=r_{21}=2 \quad \text{and} \quad r_{13}=r_{31}=r_{23}=r_{32}=3.
\end{equation*}
The action of simple reflections on 
roots is given by 
\begin{equation*}
\sigma_i \alpha_j=
\begin{cases}
\alpha_i+\alpha_j ,& \text{ if } \alpha_i \text{ and } \alpha_j \text{ are adjacent} \\
-\alpha_j, & \text{ if } i=j \\
\alpha_j, & \text{ otherwise}.
\end{cases}
\end{equation*}

Let $\Lambda(A_3)$ be the root lattice of $A_3$. Each element $\lambda \in \Lambda(A_3)$
has a unique representation as an integral combination of simple roots 
\begin{equation}
\lambda=k_1 \alpha_1+k_2 \alpha_2+k_3 \alpha_3.
\end{equation}
Set $\boldsymbol{z}:=(z_1,z_2,z_3)$ and for $\lambda \in \Lambda(A_3)$,
set $\boldsymbol{z}^{\lambda}:=z_1^{k_1} z_2^{k_2} z_3^{k_3}$. 
Fix a parameter $q \geq 1$ (we will later take $q=p$ prime).
Define  
${}^{\epsilon_i} \boldsymbol{z}=\boldsymbol{z}^{\prime}$, where 
\begin{equation*}
z_j^{\prime}=\begin{cases}
-z_{j}, & \text{ if } i \text{ and } j \text{ are adjacent} \\
z_{j}, & \text{otherwise},
\end{cases}
\end{equation*}
and ${}^{\sigma_i} \boldsymbol{z}=\widetilde{\boldsymbol{z}}$,
where
\begin{equation*}
\widetilde{z_j}=\begin{cases}
\sqrt{q} z_i z_j, & \text{ if } i \text{ and } j \text{ are adjacent} \\
1/(qz_j), & \text{ if } i=j  \\
z_j & \text{ otherwise}.
\end{cases}
\end{equation*}

For $h \in \mathbb{C}(\boldsymbol{z})$,
let 
\begin{equation} \label{plusminus}
h_i^{\pm}(\boldsymbol{z}):=\frac{1}{2} \big( h(\boldsymbol{z}) \pm h({}^{\epsilon_i} \boldsymbol{z}) \big).
\end{equation}
The action of a simple reflection $\sigma_i$ on $h \in \mathbb{C}(\boldsymbol{z})$ is given by  
\begin{equation} \label{simpleaction}
(h \lvert \sigma_i)(\boldsymbol{z}):=-\frac{1-qz_i}{qz_i(1-z_i)} h_i^{+}({}^{\sigma_i} \boldsymbol{z})+
\frac{1}{\sqrt{q} z_i } h_i^{-}({}^{\sigma_i} \boldsymbol{z}).
\end{equation}
This extends to a $W(A_3)$-action on $\mathbb{C}(\boldsymbol{z})$ \cite[Lemma~3.2]{CG1}.

Using the Weyl group action in \eqref{simpleaction}, Chinta and Gunnells \cite{CG1,CG2} constructed
a $W(A_3)$ invariant function 
$g_{A_3}(\boldsymbol{z}) \in \mathbb{C}(\boldsymbol{z})$ such that
\begin{itemize}
\item $g_{A_3}(\boldsymbol{0};q)=1$;
\item for each $i=1,2,3$, the function $(1-z_i) \cdot g_{A_3}(\boldsymbol{z};q) \lvert_{z_j=0 \text{ for all } j \text{ adjacent to } i}$
is independent of $z_i$.
\end{itemize}
The rational function satisfying the above conditions is unique \cite{W2,W1}. 

A straightforward computation verifies that
\begin{equation}
g_{A_3}(\boldsymbol{z};q):=\frac{1-z_1 z_3-z_2 z_3 +z_1 z_2 z_3+q z_1 z_2 z_3^2-qz_1^2 z_2 z_3^2
-q z_1 z_2^2 z_3^2+q z_1^2 z_2^2 z_3^3 }
{(1-z_1)(1-z_2)(1-z_3)(1-q z_1^2 z_3^2)(1-qz_2^2 z_3^2)(1-q^2 z_1^2 z_2^2 z_3^2)} \label{gdef}
\end{equation}
is the desired function \cite[Example~3.7]{CG1} (our vertices are labelled differently).

Expand $g_{A_3}(\boldsymbol{z}:q)$ in a power series 
\begin{equation} \label{powerser}
g_{A_3}(\boldsymbol{z};q)=\sum_{k_1,k_2,j \geq 0} a(k_1,k_2,j;q) z_1^{k_1} z_2^{k_2} z_3^{j}.
\end{equation} 
Define the symmetric polynomials $P_j(z_1,z_2;q) \in \mathbb{C}[z_1,z_2]$ by the expression 
\begin{align} \label{Pjdef}
g_{A_3}(\boldsymbol{z};q)&:=g^{+}_{1}(\boldsymbol{z};q)+g^{-}_{1}(\boldsymbol{z};q) \nonumber \\
&=(1-z_1)^{-1} (1-z_2)^{-1} \sum_{j \text{ even}} P_{j}(z_1,z_2;q) z_3^{j}
+\sum_{j \text{ odd}} P_{j}(z_1,z_2;q) z_3^{j}.
\end{align}
Similarly, polynomials $Q_{\boldsymbol{k}}(z_3;q) \in \mathbb{C}[z_3]$ can be defined by the relationship
\begin{align} \label{Qkdef}
g_{A_3}(\boldsymbol{z};q)&:=g^{+}_{3}(\boldsymbol{z};q)+g^{-}_{3}(\boldsymbol{z};q) \nonumber  \\
&=(1-z_3)^{-1} \sum_{\substack{\boldsymbol{k}=(k_1,k_2)  \\ |\boldsymbol{k}| \text{ even}}} 
Q_{\boldsymbol{k}}(z_3;q) z_1^{k_1} z_2^{k_2}+
\sum_{\substack{\boldsymbol{k}=(k_1,k_2)  \\ |\boldsymbol{k}| \text{ odd}}} Q_{\boldsymbol{k}}(z_3;q) z_1^{k_1} z_2^{k_2},
\end{align}
where $|\boldsymbol{k}|:=k_1+k_2$. Since $g$ is invariant under the action generated by
\eqref{simpleaction}, we have the formal functional equations
\begin{equation}  \label{Pfuncinit}
P_{j}(z_1,z_2;q)=(\sqrt{q} z_1)^{j-a_j} P_j  \left( \frac{1}{qz_1},z_2;q \right)
=(\sqrt{q} z_2)^{j-a_j} P_j \left(z_1,\frac{1}{q z_2};q \right), 
\end{equation}
and 
\begin{equation}
Q_{\boldsymbol{k}}(z_3;q)=(\sqrt{q} z_3)^{|\boldsymbol{k}|-a_{|\boldsymbol{k}|}} 
Q_{\boldsymbol{k}} \left( \frac{1}{qz_3}; q \right), \label{Qfuncinit}
\end{equation}
where $a_n=0$ or $1$ according to whether $n$ is even or odd respectively.  

\subsection{Definition of series and some properties} \label{mdsdef}
Suppose $a_1 c_1, a_2 c_2 \in \text{Div}(N)$.
Our arguments will use the following multiple Dirichlet series (formally defined as) 
\begin{equation} \label{multidir}
Z^{(N)}(s,w;\chi_{a_2 c_2},\chi_{a_1 c_1};\pi_f):=\sum_{\substack{m_1,m_2,d \geq 1 \\ \gcd(m_1 m_2 d,2N)=1}}
\frac{H^{\pi_f}(m_1,m_2,d) \chi_{a_2 c_2}(d) \chi_{a_1 c_1}(m_1 m_2)}{(m_1 m_2)^{s} d^{w} },
\end{equation}
where the coefficients $H^{\pi_f}(m_1,m_2,d)$ will now be defined using the recipe of 
\cite[Section~4]{CG1}. 

The function $H^{\pi_f}(m_1,m_2,d)$ on quadruples of odd integers satisfies a twisted multiplicativity 
property. For $(m_1 m_2 d, m_1^{\prime} m_2^{\prime} d^{\prime})=1$ we have 
\begin{equation} \label{twistmult}
H^{\pi_f}(m_1 m_1^{\prime}, m_2 m_2^{\prime}, d d^{\prime})=H^{\pi_f}(m_1,m_2,d) H^{\pi_f}(m_1^{\prime},m_2^{\prime},d^{\prime}) 
\pfrac{d}{m_1^{\prime} m_2^{\prime}}  \pfrac{d^{\prime}}{m_1 m_2}.
\end{equation}
Given property \eqref{twistmult}, it suffices to define $H^{\pi_f}(p^{k_1},p^{k_2},p^{j})$ for all primes $p \nmid 2N$. 
These coefficients are recorded by the generating function (called the $p$-part):
\begin{align*}
Z_p^{(N)}(s,w;\chi_{a_2 c_2},\chi_{a_1 c_1};\pi_{f})&:=\sum_{k_1,k_2,j \geq 0} \frac{H^{\pi_f}(p^{k_1},p^{k_2},p^j) \chi_{a_2 c_2}(p)^{j} 
\chi_{a_1 c_1}(p)^{k_1+k_2}}{p^{(k_1+k_2)s+jw}} \nonumber \\
&=g_{A_3} \left(\chi_{a_1 c_1}(p) \alpha_p p^{-s}, \chi_{a_1 c_1}(p) \beta_p p^{-s}, \chi_{a_2 c_2}(p) p^{-w};p \right),
\end{align*}
where $\alpha_p, \beta_p$ are 
the Satake parameters attached to $\pi_{f,p}$.
In other words,
\begin{equation} \label{primepower}
H^{\pi_f}(p^{k_1},p^{k_2},p^{j}):=a(k_1,k_2,j;p) \alpha_p^{k_1} 
   \beta_p^{k_2} \quad  \text{ for all primes } p \nmid 2N \quad \text{and}  \quad j,k_1,k_2 \geq 0,
\end{equation}
where $a(k_1,k_2,j;p)$ are the coefficients in the power series expansion in \eqref{powerser}.

Using
\begin{equation*}
Z^{(N)}(s,w;\chi_{a_2 c_2},\chi_{a_1 c_1};\pi_f)=\prod_p Z^{(N)}_p(s,w;\chi_{a_2 c_2},\chi_{a_1 c_1};\pi_f),
\end{equation*}
we see that $Z^{(N)}(s,w;\chi_{a_2 c_2},\chi_{a_1 c_1};\pi_f)$ converges absolutely 
for $\Re(s),\Re(w) \gg 1$. 

We also have  
 \begin{align}
H^{\pi_f}(p^{k_1},p^{k_2},p^{j})=  \alpha_p^{k_1} 
  \beta_p^{k_2}, & \quad \text{when}  \quad \min(k_1+k_2,j)=0; \label{fact1} 
 \end{align}
 \begin{equation}
H^{\pi_f}(p^{k_1},p^{k_2},p^{j})=0, \quad \text{when}  \quad \min(k_1+k_2,j)=1; \label{fact2}  
\end{equation}
\begin{equation}
\hspace{1.2cm} H^{\pi_f}(p^{k_1},p^{k_2},p^{j})=0, \quad \text{when}  \quad k_1+k_2 \equiv j \equiv 1 \pmod{2}. \label{fact3}
\end{equation}

For $d,m_1,m_2$ positive integers with $(dm_1m_2,2N)=1$,
consider the following Dirichlet polynomials built from the $P_j$ and 
$Q_{\boldsymbol{k}}$ given in \eqref{Pjdef} and \eqref{Qkdef}:
\begin{align} \label{correct1}
 \mathcal{P}_d(s,\chi_{a_1 c_1};\pi_f)&:=
   \prod_{\substack{p^{j} \parallel d \\ j \geq 2 \\ j \equiv 0 \hspace{0.1cm} (2) } }
  P_j \left(\chi_{a_1 c_1}(p) \chi_{dp^{-j}}(p) \alpha_p p^{-s},
 \chi_{a_1 c_1}(p) \chi_{dp^{-j}}(p) \beta_p p^{-s};p \right) \nonumber \\
 & \times \prod_{\substack{p^{j} \parallel d \\ j \geq 2 \\ j \equiv 1 \hspace{0.1cm} (2) } } 
P_j(\alpha_p p^{-s},\beta_p p^{-s};p); \\
\mathcal{Q}_{m_1,m_2}(w,\chi_{a_2 c_2};\pi_f)&:= 
\prod_{\substack{p^{k_1} \parallel m_1  \\ p^{k_2} \parallel m_2  \\ \boldsymbol{k} \equiv 0 \hspace{0.1cm} (2) \\ |\boldsymbol{k}| \geq 2 } } 
 \alpha_p^{k_1} \beta_p^{k_2}  Q_{\boldsymbol{k}} \big(\chi_{a_2 c_2}(p) \chi_{m_1 m_2 p^{-k_1-k_2}}(p) p^{-w};p \big) \nonumber \\
& \times \prod_{\substack{p^{k_1} \parallel m_1  \\ p^{k_2} \parallel m_2  \\ |\boldsymbol{k}| \equiv 1 \hspace{0.1cm}
 (2) \\ |\boldsymbol{k}| \geq 2} } \label{correct2}
\alpha_p^{k_1} \beta_p^{k_2} Q_{\boldsymbol{k}}(p^{-w};p); \\
\widetilde{\mathcal{Q}}_{n}(w,\chi_{a_2 c_2};\pi_f)&:= \sum_{m_1 m_2=n} 
\mathcal{Q}_{m_1,m_2}(w,\chi_{a_2 c_2};\pi_f). \label{correct3}
\end{align}
Observe that 
\begin{align}
\widetilde{\mathcal{Q}}_n(w,\chi_{a_2 c_2};\pi_f)&:=\prod_{\substack{p^k \parallel n \\
k \equiv 0 \hspace{0.1cm} (2) \\ k \geq 2}}
 \Big( \sum_{\substack{k_1+k_2=k \\ k_1,k_2 \geq 0}} 
 \alpha_p^{k_1} \beta_p^{k_2}  Q_{\boldsymbol{k}} \big(\chi_{a_2 c_2}(p) \chi_{n p^{-k}}(p) p^{-w};p \big) \Big ) \nonumber \\
 & \times \prod_{\substack{p^k \parallel n \\
k \equiv 1 \hspace{0.1cm} (2) \\ k \geq 2}} \Big( \sum_{\substack{k_1+k_2=k \\ k_1,k_2 \geq 0}} 
 \alpha_p^{k_1} \beta_p^{k_2} Q_{\boldsymbol{k}} \big(p^{-w};p \big) \Big ).
\end{align}

The Dirichlet polynomials $\mathcal{P}_d$ (resp. $\widetilde{\mathcal{Q}}_n$)
inherit functional equations from the local ones for $P_j$ (resp. $Q_{\boldsymbol{k}}$) 
given in \eqref{Pfuncinit} (resp. \eqref{Qfuncinit}).

\begin{lemma} \label{corrfunc}
Let $f \in S^{\emph{new}}_{\ell}(N)$ and $a_1 c_1, a_2 c_2 \in \emph{Div}(N)$.
Suppose that $d=d_0 d_1^2$ and $n=n_0 n_1^2$ where $(dn,2N)=1$ and $\mu^2(d_0)=\mu^2(n_0)=1$.
Then 
\begin{equation} \label{Pfunc}
 \mathcal{P}_d(s,\chi_{a_1 c_1};\pi_f)=d_1^{2-4s} \mathcal{P}_d(1-s,\chi_{a_1 c_1};\pi_f),
\end{equation}
and 
 \begin{equation} \label{Qfunc2}
\widetilde{\mathcal{Q}}_{n}(w,\chi_{a_2 c_2};\pi_f)=n_1^{1-2w} \widetilde{\mathcal{Q}}_{n} (1-w,\chi_{a_2 c_2};\pi_f),
 \end{equation} 
 where $\mathcal{P}_d(s,\chi_{a_1 c_1};\pi_f)$ and $\widetilde{\mathcal{Q}}_{n}(w,\chi_{a_2 c_2};\pi_f)$
 are defined by \eqref{correct1} and \eqref{correct3}.
\end{lemma}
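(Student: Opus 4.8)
The plan is to reduce both identities to the local functional equations \eqref{Pfuncinit} and \eqref{Qfuncinit} and then track powers of $p$. Throughout I will use two elementary facts: every character in sight — i.e. $\chi_{a_1 c_1}$, $\chi_{a_2 c_2}$, $\chi_{dp^{-j}}$, $\chi_{np^{-k}}$ — is real-valued, so its value at a prime is $0$ or $\pm 1$ and in particular squares to $1$ when nonzero; and for $p \nmid N$ the Satake parameters of $\pi_{f,p}$ satisfy $\alpha_p \beta_p = 1$, since $f$ has trivial nebentypus. Note also that $s \mapsto 1-s$ sends $p^{-s}$ to $p^{-(1-s)} = (p \cdot p^{-s})^{-1}$, which is exactly the feature that makes the local functional equations interact with the Dirichlet-polynomial structure. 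Since the $P_j$ and $Q_{\boldsymbol{k}}$ are polynomials, \eqref{Pfuncinit} and \eqref{Qfuncinit} are identities of rational functions, so the substitutions below are legitimate.

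For \eqref{Pfunc}: by the product shape of \eqref{correct1} it suffices to treat each prime $p^{j}\parallel d$ with $j\ge 2$ separately. Write the local arguments as $z_1 = \chi(p)\alpha_p p^{-s}$ and $z_2 = \chi(p)\beta_p p^{-s}$, where $\chi = \chi_{a_1 c_1}\chi_{dp^{-j}}$ if $j$ is even and $\chi$ is trivial if $j$ is odd; in either case $z_1 z_2 = p^{-2s}$ because the character squares to $1$ and $\alpha_p\beta_p = 1$. Applying the $z_1$-half and then the $z_2$-half of \eqref{Pfuncinit} gives
\[
P_j(z_1,z_2;p) = (p z_1 z_2)^{\,j-a_j}\,P_j\bigl(\tfrac{1}{pz_1},\tfrac{1}{pz_2};p\bigr) = p^{(1-2s)(j-a_j)}\,P_j\bigl(\tfrac{1}{pz_1},\tfrac{1}{pz_2};p\bigr),
\]
where $\tfrac{1}{pz_1} = \chi(p)\beta_p p^{-(1-s)}$ and $\tfrac{1}{pz_2} = \chi(p)\alpha_p p^{-(1-s)}$; since $P_j$ is symmetric in its first two variables, the right-hand $P_j$ is exactly the $p$-factor of $\mathcal{P}_d(1-s,\chi_{a_1 c_1};\pi_f)$. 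Multiplying over all $p^{j}\parallel d$ with $j\ge 2$ and using $j-a_j = 2\lfloor j/2\rfloor = \ord_p(d_1^2)$ (valid since $d_0$ is squarefree) produces the scalar $\prod_{p^{j}\parallel d,\,j\ge 2} p^{(1-2s)(j-a_j)} = d_1^{2-4s}$, which is \eqref{Pfunc}.

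For \eqref{Qfunc2} the argument is the same with \eqref{Qfuncinit} in place of \eqref{Pfuncinit}, except that the inner sum $\sum_{k_1+k_2=k}\alpha_p^{k_1}\beta_p^{k_2}(\cdots)$ must be carried along. Fix $p^{k}\parallel n$ with $k\ge 2$ and set $z_3 = \chi(p) p^{-w}$, with $\chi = \chi_{a_2 c_2}\chi_{np^{-k}}$ if $k$ is even and $\chi$ trivial if $k$ is odd. Then \eqref{Qfuncinit} gives, for every $\boldsymbol{k} = (k_1,k_2)$ with $k_1+k_2 = k$,
\[
Q_{\boldsymbol{k}}(z_3;p) = (\sqrt{p}\,z_3)^{\,k-a_k}\,Q_{\boldsymbol{k}}\bigl(\tfrac{1}{pz_3};p\bigr),
\]
and the scaling exponent $k-a_k$ depends only on $k$, so it pulls out of the sum over $(k_1,k_2)$. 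Moreover $(\sqrt{p}\,z_3)^{k-a_k} = \chi(p)^{k-a_k} p^{(1/2-w)(k-a_k)} = p^{(1/2-w)(k-a_k)}$ because $k-a_k$ is even, and $\tfrac{1}{pz_3} = \chi(p) p^{-(1-w)}$ is precisely the argument occurring in $\widetilde{\mathcal{Q}}_n(1-w,\chi_{a_2 c_2};\pi_f)$. Taking the product over $p^{k}\parallel n$ with $k\ge 2$ and using $k-a_k = 2\lfloor k/2\rfloor = \ord_p(n_1^2)$ yields the factor $n_1^{1-2w}$, which is \eqref{Qfunc2}.

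The computations themselves are bookkeeping; the two places where care is genuinely needed — and where an error would most naturally slip in — are (i) checking that the character contributions really disappear, namely that $z_1 z_2$ is character-free in the $\mathcal{P}$ computation and that $\chi(p)^{k-a_k} = 1$ in the $\widetilde{\mathcal{Q}}$ computation, both of which rest on the characters being real-valued and on $\alpha_p\beta_p = 1$ for $p\nmid N$; and (ii) confirming that, since each $P_j$ (resp.\ $Q_{\boldsymbol{k}}$) enters \eqref{correct1} (resp.\ \eqref{correct3}) with specific arguments, applying both halves of \eqref{Pfuncinit} and then invoking the symmetry of $P_j$ reproduces exactly the arguments of $\mathcal{P}_d(1-s,\chi_{a_1 c_1};\pi_f)$ rather than a twisted variant — and similarly for $\widetilde{\mathcal{Q}}_n$, where the role of symmetry is played by the $(k_1,k_2)$-independence of the scaling factor. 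Everything else is the routine identity $j-a_j = \ord_p(d_1^2)$ (resp.\ $k-a_k = \ord_p(n_1^2)$).
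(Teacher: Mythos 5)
Your proof is correct and follows exactly the route the paper intends: the paper gives no explicit proof of Lemma \ref{corrfunc}, stating only that $\mathcal{P}_d$ and $\widetilde{\mathcal{Q}}_n$ ``inherit functional equations from the local ones'' \eqref{Pfuncinit} and \eqref{Qfuncinit}, and your argument carries out precisely that local-to-global reduction, correctly tracking the character and Satake-parameter bookkeeping (via $\chi(p)^2=1$ and $\alpha_p\beta_p=1$ for $p\nmid N$) and identifying $j-a_j=\ord_p(d_1^2)$ and $k-a_k=\ord_p(n_1^2)$.
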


The Dirichlet polynomials $\mathcal{P}_d$ and 
 $\widetilde{\mathcal{Q}}_{n}$ satisfy crude bounds.
\begin{lemma} \label{corrbound}
Let $f \in \mathcal{S}^{\emph{new}}_{\ell}(N)$ and $a_1 c_1, a_2 c_2 \in \emph{Div}(N)$. 
Suppose that $d=d_0 d_1^2$ and $n=n_0 n_1^2$ where $(dn,2N)=1$ and $\mu^2(d_0)=\mu^2(n_0)=1$.
Then 
\begin{equation} \label{Pbound}
| \mathcal{P}_d(s,\chi_{a_1 c_1};\pi_f)| \ll_{\varepsilon}
\begin{cases}
d_1^{\varepsilon}, & \Re(s) \geq \frac{1}{2} \\
d_1^{2-4 \Re(s)+\varepsilon}, & \Re(s) < \frac{1}{2},
\end{cases}
\end{equation}
and 
\begin{equation} \label{Qbound} 
|\widetilde{\mathcal{Q}}_{n}(w,\chi_{a_2 c_2};\pi_f)| \ll_{\varepsilon}
\begin{cases}
n_1^{1/2+\varepsilon}, & \Re(w) \geq \frac{1}{2} \\
n_1^{3/2-2 \Re(w)+\varepsilon}, & \Re(w) < \frac{1}{2},
\end{cases}
\end{equation}
where $\mathcal{P}_d(s,\chi_{a_1 c_1};\pi_f)$ and $\widetilde{\mathcal{Q}}_{n}(w,\chi_{a_2 c_2};\pi_f)$
are defined by \eqref{correct1} and \eqref{correct3}.
The implied constants depend only on $\varepsilon>0$.
\end{lemma}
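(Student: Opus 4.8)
The plan is to reduce everything to uniform bounds on the local building blocks $P_j$ and $Q_{\boldsymbol k}$ appearing in \eqref{Pjdef} and \eqref{Qkdef}, using the Euler-product structure of $\mathcal{P}_d$ and $\widetilde{\mathcal{Q}}_n$ together with the Ramanujan bound $|\alpha_p|=|\beta_p|=1$ (valid since $f$ is holomorphic and $p\nmid 2N$). First I would establish, purely from \eqref{gdef}, that each coefficient $a(k_1,k_2,j;q)$ is bounded in absolute value by a fixed polynomial in $j$ of degree $O(1)$ with coefficients independent of $q$ for $q\ge 2$: indeed the numerator of $g_{A_3}$ has coefficients $\le q$ in absolute value while the denominator contributes a geometric-type expansion, and one checks that extracting the coefficient of $z_3^j$ (after setting $z_1=z_2=1$, which is the worst case for the bound) produces a quantity of size $\ll j^{C}$. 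This yields $|P_j(x_1,x_2;q)|\ll j^{C}$ whenever $|x_1|,|x_2|\le 1$, and similarly $|Q_{\boldsymbol k}(x_3;q)|\ll |\boldsymbol k|^{C}$ for $|x_3|\le 1$; crucially the bound is uniform in $q$.

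Next I would insert these into the definitions. For $\Re(s)\ge 1/2$, in each local factor of \eqref{correct1} the arguments $\chi_{\bullet}(p)\alpha_p p^{-s}$ and $\chi_\bullet(p)\beta_p p^{-s}$ have modulus $\le p^{-1/2}\le 1$, so $|P_j(\cdots;p)|\ll j^C=(\log_p d)^{C}$; multiplying over the $O(\log d /\log\log d)$ primes $p^j\parallel d$ with $j\ge 2$ gives a total of $\ll d^{\varepsilon}$ after a standard divisor-type estimate, since each prime dividing $d$ this way satisfies $p^2\le d$, hence $p\ge 3$ and the number of such primes is $\ll \log d$. This gives the first line of \eqref{Pbound}. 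For $\Re(s)<1/2$ I would invoke the functional equation \eqref{Pfunc} from Lemma \ref{corrfunc}: writing $s'=1-s$ so $\Re(s')>1/2$, one has $|\mathcal{P}_d(s,\cdots)|=d_1^{2-4\Re(s)}\,|\mathcal{P}_d(1-s,\cdots)|\ll d_1^{2-4\Re(s)}\,d_1^{\varepsilon}$, where I use $d^{\varepsilon}\ll d_1^{\varepsilon}$ only after noting that actually the product in \eqref{correct1} depends on $d$ only through its prime powers $p^j\parallel d$ with $j\ge 2$, all of which divide $d_1^2$ — so the bound is genuinely $\ll d_1^{\varepsilon}$ rather than $d^{\varepsilon}$. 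The analogous argument for $\widetilde{\mathcal{Q}}_n$ runs the same way, using \eqref{Qfunc2}: the extra factor $n_1^{1/2}$ in the $\Re(w)\ge 1/2$ case of \eqref{Qbound} arises because each local factor in \eqref{correct3}, being a sum over $k_1+k_2=k$ of terms $\alpha_p^{k_1}\beta_p^{k_2}Q_{\boldsymbol k}(\cdots;p)$, is bounded by $(k+1)\cdot k^C\ll k^{C+1}$, but the degree-$k/2$ internal structure of $Q_{\boldsymbol k}$ (reflecting that $n_1$-type primes contribute) forces a clean $p^{k/2}$-type weight; tracking this through the product over $p^k\parallel n$ yields exactly $n_1^{1/2+\varepsilon}$, and the functional equation \eqref{Qfunc2} then gives the second line for $\Re(w)<1/2$.

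The main obstacle is the first step: obtaining a bound on $a(k_1,k_2,j;q)$ that is \emph{polynomial in $j$} (not merely $\ll_\varepsilon q^{\varepsilon j}$ or similar) and \emph{uniform in $q\ge 2$}. A naive expansion of \eqref{gdef} produces the factor $(1-q^2 z_1^2 z_2^2 z_3^2)^{-1}$, whose coefficients grow like $q^{2m}$; the point is that this growth is cancelled against the $z_3$-powers, so one must expand carefully and exhibit the cancellation — concretely, substitute $z_1=z_2=1$ (legitimate since we only need the bound at arguments of modulus $\le 1$ and the coefficients are monotone in this sense after taking absolute values) and observe that the resulting rational function of $z_3$ and $q$ has, after partial fractions, only poles at $z_3=\pm q^{-1/2}$ of bounded order, so the coefficient of $z_3^j$ is $\ll j^{O(1)} q^{j/2}$, and the $q^{j/2}$ is precisely absorbed by the $p^{-js/2}$-type normalisation built into the definitions \eqref{correct1}–\eqref{correct3} via the Satake normalisation. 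Once this uniform polynomial bound is in hand, everything else is the routine divisor-bound bookkeeping sketched above.
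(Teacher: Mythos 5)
The paper's proof is a citation to the Cauchy-estimate/maximum-principle argument of Diaconu (Appendix~B), which is the same strategy you propose: bound the local $P_j$, $Q_{\boldsymbol k}$ from the rational generating function, assemble by divisor-bound bookkeeping, and dispatch $\Re(s),\Re(w)<\tfrac12$ by the functional equations \eqref{Pfunc}, \eqref{Qfunc2}. However the pivotal local estimate you state is false as written, and the proposed repair in your last paragraph does not fix it.

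Concretely, the claim $|P_j(x_1,x_2;q)|\ll j^{C}$ for $|x_1|,|x_2|\le 1$ \emph{uniformly in} $q$ cannot hold. The functional equation \eqref{Pfuncinit} gives, for $j$ even,
\begin{equation*}
P_j(z_1,z_2;q)=(\sqrt{q}\,z_1)^{j}\,P_j\!\left(\tfrac{1}{qz_1},z_2;q\right),
\end{equation*}
so the top coefficient of $P_j$ in $z_1$ is $q^{j/2}$ times the constant term; hence $P_j(1,1;q)\sim q^{j/2}$ as $q\to\infty$, already for $j=2$. The estimate you need is only on the polydisc $|x_1|,|x_2|\le q^{-1/2}$ (which is exactly where \eqref{correct1} evaluates $P_j$ when $\Re(s)\ge\tfrac12$), and this is precisely the radius at which that $q^{j/2}$ growth is neutralised. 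The substitution $z_1=z_2=1$ is also not legitimate: $g_{A_3}$ in \eqref{gdef} has genuine first-order poles there, and since the numerator of $g_{A_3}$ has mixed signs, the assertion that the coefficients are ``monotone after taking absolute values'' has no basis. A correct version of the step is the contour estimate at $|z_1|=|z_2|=q^{-1/2}$ with $|z_3|=1-1/j$: all singularities of $g_{A_3}$ in $z_3$ then sit on $|z_3|=1$, the numerator is $O(1)$, and the (at most four) nearly-vanishing denominator factors each contribute $\gg 1/j$, giving $|P_j|\ll j^{O(1)}$ on the relevant domain uniformly in $q$.

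The $Q_{\boldsymbol k}$ case has the same flaw, compounded by an internal contradiction: you first assert $|Q_{\boldsymbol k}(x_3;q)|\ll|\boldsymbol k|^{C}$ uniformly in $q$, then in the next paragraph invoke a ``$p^{k/2}$-type weight'' inside $Q_{\boldsymbol k}$ to manufacture the $n_1^{1/2}$ in \eqref{Qbound}. Both cannot hold. In fact the $Q$-side is genuinely asymmetric to the $P$-side: with $|z_3|=q^{-1/2}$, the factor $(1-q^2z_1^2z_2^2z_3^2)^{-1}$ in \eqref{gdef} places a singularity at $|z_1z_2|=q^{-1/2}$, which forces the $z_1,z_2$ contours inside radius $q^{-1/4}$ and injects a power of $q$ into the Cauchy estimate — this is the source of the $n_1^{1/2}$ in \eqref{Qbound}, and it means no $q$-uniform bound at $|x_3|\le 1$ can hold. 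Also note the exponent in your ``$p^{k/2}$-type weight'' is off: the local contribution to $n_1^{1/2}$ from $p^{k}\parallel n$ with $p^{\lfloor k/2\rfloor}\parallel n_1$ is $p^{\lfloor k/2\rfloor/2}$, not $p^{k/2}$. Until the local bounds are stated and proved on the correct domain ($|z_i|\le q^{-1/2}$), with the $q$-dependence that is actually present, the deduction of \eqref{Pbound}--\eqref{Qbound} does not go through.
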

\begin{proof}
This follows from a straightforward modification of the argument in \cite[Appendix~B]{Di}
using the maximum principle and Cauchy's inequality.
\end{proof}

The Dirichlet polynomials $ \mathcal{P}_d(s,\chi_{a_1 c_1};\pi_f)$ and $\widetilde{\mathcal{Q}}_{n}(w,\chi_{a_2 c_2};\pi_f)$
 are in known in the literature as \emph{correction polynomials}. Their purpose 
 is to give two different representations 
of \eqref{multidir}, each absolutely convergent in different tube domains in $\mathbb{C}^2$ \cite{BFH,HK}.
\begin{lemma} \label{rep1lem}
Let $f \in S^{\emph{new}}_{\ell}(N)$ and $a_1 c_1, a_2 c_2 \in \emph{Div}(N)$.
We have 
\begin{equation} \label{rep1}
Z^{(N)}(s,w;\chi_{a_2 c_2}, \chi_{a_1 c_1};\pi_f)=\sum_{\substack{d \geq 1 \\ d=d_0 d_1^2  \\ (d,2N)=1}}
 \frac{L^{(2N)}(s,\pi_f \otimes \chi_{a_1 c_1 d_0}) \chi_{a_2 c_2}(d) \mathcal{P}_d(s,\chi_{a_1 c_1};\pi_f)}{d^w},
\end{equation}
on the domain 
\begin{equation} \label{omega1}
\Omega_1:=\{(s,w) \in \mathbb{C}^2: 2 \Re(s)+\Re(w)>2 \} \cap \{(s,w): \Re(w)>1 \},
\end{equation}
and 
\begin{equation} \label{rep2}
Z^{(N)}(s,w; \chi_{a_2 c_2}, \chi_{a_1 c_1};\pi_f)=
\sum_{\substack{n \geq 1 \\ (n,2N)=1 \\ n=n_0 n_1^2}}
 \frac{L^{(2N)}(w, \chi_{a_2 c_2} \widetilde{\chi}_{n_0}) 
\chi_{a_1 c_1}(n) \widetilde{\mathcal{Q}}_{n}(w,\chi_{a_2 c_2};\pi_f)}{n^s},
\end{equation} 
on the domain 
\begin{equation} \label{omega2}
\Omega_2:=\Big \{(s,w) \in \mathbb{C}^2: \Re(s)+\Re(w)>\frac{3}{2}  \Big \} \cap \{(s,w): \Re(s)>1 \},
\end{equation}
with exception of a polar hyperplane $w=1$ when $a_2=c_2=1$. 
Thus the functions
\begin{equation} \label{holintermed}
(w-1) Z^{(N)}(s,w;\chi_{a_2 c_2}, \chi_{a_1 c_1};\pi_f) 
\end{equation}
are holomorphic on $\Omega_1 \cup \Omega_2$. 
\end{lemma}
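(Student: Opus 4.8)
The plan is to establish the two series representations \eqref{rep1} and \eqref{rep2} by unfolding the Euler product $Z^{(N)} = \prod_p Z_p^{(N)}$ along the two dual variables, and then identify the two regions of absolute convergence from the growth of the correction polynomials. First I would fix the $d$-variable (resp.\ the $m_1 m_2$-variable) and regroup the triple sum in \eqref{multidir}. For \eqref{rep1}: collect all terms with a fixed value of $d = d_0 d_1^2$. The $d$-independent inner sum over $m_1, m_2$ should, via the twisted multiplicativity \eqref{twistmult} and the prime-power formula \eqref{primepower}, factor as an Euler product whose $p$-part is governed by $g_1^+ + g_1^-$ as in \eqref{Pjdef}. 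At primes $p \nmid d$ (so $j=0$ locally) the fact \eqref{fact1} gives $H^{\pi_f}(p^{k_1},p^{k_2},1)=\alpha_p^{k_1}\beta_p^{k_2}$, and summing $\sum_{k_1,k_2}\alpha_p^{k_1}\beta_p^{k_2}\chi_{a_1c_1d_0}(p)^{k_1+k_2}p^{-(k_1+k_2)s}$ — after using $\chi_{a_1c_1}(p)\chi_{d_0}(p)=\chi_{a_1c_1d_0}(p)$ and the quadratic reciprocity bookkeeping in \eqref{quadrep} to absorb the Jacobi symbols from \eqref{twistmult} — reproduces exactly the Euler factor $L(s,\pi_{f,p}\otimes\chi_{a_1c_1d_0})$. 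At primes $p \mid d$ the local sum is precisely the factor defining $\mathcal{P}_d$ in \eqref{correct1} (with the parity split matching \eqref{Pjdef}). This yields \eqref{rep1} with $L^{(2N)}$ because the Euler product omits $p \mid 2N$. The derivation of \eqref{rep2} is the mirror image: fix $n = m_1 m_2$, use \eqref{Qkdef} instead of \eqref{Pjdef}, invoke \eqref{correct3} and the identity $\widetilde{\chi}_{n_0}=\chi_d(n_0)$ from the real-character conventions, and the unramified local sum over $j$ collapses to $L(w,\chi_{a_2c_2}\widetilde{\chi}_{n_0})$; the pole at $w=1$ when $a_2=c_2=1$ is simply the pole of $L^{(2N)}(w,\mathbf{1})$, i.e.\ of $\zeta^{(2N)}(w)$.

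Once the two formal identities are in hand, absolute convergence is a bookkeeping matter. For \eqref{rep1}, I would bound $L^{(2N)}(s,\pi_f\otimes\chi_{a_1c_1d_0})$ trivially by $d_0^{\varepsilon}$-type growth for $\Re(s)>1$ (and more generally by convexity, but in $\Omega_1$ we have $\Re(s)>1/2$ and can use the subconvexity-free region once $2\Re(s)+\Re(w)>2$ is combined with the Dirichlet-polynomial length), bound $\mathcal{P}_d$ by Lemma \ref{corrbound} — which for $\Re(s)\ge 1/2$ gives $d_1^{\varepsilon}$ — and then the sum over $d=d_0d_1^2$ converges provided $\Re(w)$ is large enough relative to the effective length $\ll d_0^{\,2\Re(s)-1+\varepsilon}$ of the partial-sum approximation to $L^{(2N)}(s,\pi_f\otimes\chi_{a_1c_1d_0})$; tracking the exponents gives precisely the half-space $2\Re(s)+\Re(w)>2$, intersected with $\Re(w)>1$ to handle the $d_0$-sum directly. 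For \eqref{rep2}, the analogous estimate uses $\widetilde{\mathcal{Q}}_n \ll n_1^{1/2+\varepsilon}$ from \eqref{Qbound}, the effective length $\ll n_0^{\,\Re(w)-1/2+\varepsilon}$ of $L^{(2N)}(w,\chi_{a_2c_2}\widetilde{\chi}_{n_0})$, and the sum over $n=n_0n_1^2$ converges on $\Re(s)+\Re(w)>3/2$ intersected with $\Re(s)>1$. Finally, since the left side $Z^{(N)}$ is one and the same function given by an Euler product absolutely convergent for $\Re(s),\Re(w)\gg 1$, the two representations agree with it on the overlap; multiplying by $(w-1)$ removes the only singularity (the $w=1$ hyperplane from \eqref{rep2}), so \eqref{holintermed} is holomorphic on $\Omega_1\cup\Omega_2$.

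I expect the main obstacle to be the careful matching of the twisted-multiplicativity Jacobi symbols in \eqref{twistmult} against the character twists built into the arguments of $P_j$ and $Q_{\boldsymbol{k}}$ in \eqref{correct1}–\eqref{correct3}: one must verify that the cross terms $\left(\tfrac{d}{m_1'm_2'}\right)\left(\tfrac{d'}{m_1m_2}\right)$ accumulated by splitting $d$ and $n$ into their prime-power parts combine, via quadratic reciprocity \eqref{quadrep} and the definition $\chi_{dp^{-j}}$, into exactly the local character $\chi_{a_1c_1}(p)\chi_{dp^{-j}}(p)$ appearing inside $P_j$ (and the analogue for $Q_{\boldsymbol{k}}$). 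This is the step where the specific normalisations of the Chinta–Gunnells weights and the real-character conventions of Section 2.3 must be reconciled; everything else is either the standard Euler-product unfolding or a convexity-vs-length count that only requires the crude bounds already proved in Lemma \ref{corrbound}. A secondary technical point is ensuring the rearrangements are legitimate, which follows because in $\Omega_1 \cap \Omega_2 \cap \{\Re(s),\Re(w)\gg 1\}$ all sums converge absolutely, and then the identities propagate by analytic continuation within each of $\Omega_1$ and $\Omega_2$.
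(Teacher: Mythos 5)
The Euler--product unfolding you describe — fixing $d$ and re-assembling the inner $m_1,m_2$ sum into $L^{(2N)}(s,\pi_f\otimes\chi_{a_1c_1d_0})\,\mathcal{P}_d$ via \eqref{twistmult}, \eqref{primepower}, \eqref{Pjdef}, and symmetrically for \eqref{rep2} — is exactly the paper's route for establishing the identities in the region $\Re(s),\Re(w)\gg 1$, and that part of your argument is sound.

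The gap is in the convergence bookkeeping, which is where the content of the lemma actually lies. You assert that ``in $\Omega_1$ we have $\Re(s)>1/2$''; this is false. The point $(s,w)=(1/4,\,2)$ satisfies $2\Re(s)+\Re(w)=5/2>2$ and $\Re(w)=2>1$, so it lies in $\Omega_1$ although $\Re(s)<1/2$. Because of this, your plan of using pointwise convexity (plus a partial-sum/length heuristic) for $L^{(2N)}(s,\pi_f\otimes\chi_{a_1c_1d_0})$ does not reach the stated domain. Convexity gives $|L(s,\pi_f\otimes\chi_{d_0})|\ll_{\varepsilon} d_0^{1-\Re(s)+\varepsilon}$ for $0\le\Re(s)\le1$, and feeding that into \eqref{rep1} together with \eqref{Pbound} yields absolute convergence only on $\{\Re(s)+\Re(w)>2\}\cap\{\Re(w)>1\}$, which is strictly smaller than $\Omega_1$. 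Likewise, pointwise convexity for the Dirichlet $L$-functions in \eqref{rep2} gives a region strictly smaller than $\Omega_2$. What the paper actually uses to bridge this gap is Heath--Brown's quadratic large sieve, which supplies a Lindel\"of-on-average bound: the averaged estimate $\sum_{d_0\le D}|L(s,\pi_f\otimes\chi_{a_1c_1d_0})|\ll D^{1+\max(1-2\Re(s),0)+\varepsilon}$ (using the quadratic large sieve together with the functional equation \eqref{functionaleq2} when $\Re(s)<1/2$) combined with \eqref{Pfunc} and \eqref{Pbound} gives convergence and holomorphy of \eqref{rep1} on all of $\Omega_1$, and the analogous argument with \eqref{dirtwistfunc}, \eqref{Qfunc2} and \eqref{Qbound} gives \eqref{rep2} on $\Omega_2$. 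Without a mean-value tool of this strength you cannot reach the claimed domains, so you should replace the ``partial-sum length'' heuristic with an explicit appeal to the quadratic large sieve.
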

\begin{proof}

The formulas \eqref{rep1} and \eqref{rep2} hold for $\Re(s),\Re(w) \gg 1$
by a straightforward modification of the computations in \cite[Section~3]{Di}. 

Then
\eqref{rep1} (resp. \eqref{rep2}) can be extended to hold on the domain \eqref{omega1} 
(resp. \eqref{omega2}) using Heath--Brown's quadratic large sieve \cite[Corollary~3]{HB}
together with \eqref{functionaleq2} and \eqref{Pfunc} (resp. \eqref{dirtwistfunc} and 
\eqref{Qfunc2}), and the bound
\eqref{Pbound} (resp. \eqref{Qbound}). 
 
\end{proof}

\subsection{Functional equations, meromorphic continuation and residues}
Recall that recall $N$ is as in \eqref{factorise} and $f \in S_{\ell}^{\text{new}}(N)$ 
is twist minimal. 

Our argument in Section \ref{mainargument}
requires the exact scattering matrix for the functional equations involving
$Z^{(N)}(s,w;\chi_{a_2 c_2}, \chi_{a_1 c_1};\pi_f)$. Their precise shape 
is determined by the factorisation of $N$ in  \eqref{factorise}
(in other words the ramified local representations $\pi_{f,p}$). 

Following ~\cite[Section~4]{DGH}, we will store the multiple Dirichlet 
series defined above in vector form. 
Denote 
\begin{equation*}
\overrightarrow{\boldsymbol{Z}}^{(N)}(s,w; \chi_{\text{Div}(N)}, \chi_{a_1 c_1};\pi_f), \quad  
\big(\text{resp.} \quad \overrightarrow{\boldsymbol{Z}}^{(N)}(s,w; \chi_{a_2 c_2}, \chi_{\text{Div}(N)};\pi_f) \big)
\end{equation*}
the $4 \mathsf{rad}(N) \times 1$ column vector whose entries are 
\begin{equation*}
Z^{(N)}(s,w; \chi^{(j)}, \chi_{a_1 c_1};\pi_f) \quad (\text{resp.} \quad
Z^{(N)} \big(s,w; \chi_{a_2 c_2},\chi^{(j)} ;\pi_f) \big),
\end{equation*}
where $\chi^{(j)}$ for $j=1,\ldots,4 \mathsf{rad}(N)$ range 
over the characters $\chi_{a_2 c_2}$ (resp. $\chi_{a_1 c_1})$.

Let $\gamma_1,\gamma_2: \mathbb{C}^2 \rightarrow \mathbb{C}^2$ be 
two involutions defined by 
\begin{equation*}
\gamma_1(s,w)=(1-s,w+2s-1) \quad \text{and} \quad \gamma_2(s,w)=(s+w-1/2,1-w).
\end{equation*}
The symmetry group generated by these two involutions is isomorphic to the dihedral
group of order $8$.

A version of the following lemma is an implicit in \cite{HK}. 
\begin{lemma} \label{matrixfunc}
Let $f \in \mathcal{S}^{\text{\emph{new}}}_{\ell}(N)$ be twist minimal and $\Omega_1$ 
\emph{(}resp. $\Omega_2$\emph{)} be as in \eqref{omega1} 
\emph{(}resp. \eqref{omega2}\emph{)} 
respectively. For each $a_1 c_1 \in \emph{Div}(N)$, there exists a 
$4 \mathsf{rad}(N) \times 4 \mathsf{rad}(N)$ matrix $\Phi_{a_1 c_1}(s;\pi_f)$ of meromorphic 
functions in $s$ such that 
for all $(s,w) \in \Omega_1$, we have 
\begin{equation} \label{exacfunc1}
\overrightarrow{\boldsymbol{Z}}^{(N)}(s,w; \chi_{\emph{Div}(N)}, \chi_{a_1 c_1};\pi_f)= 
\Phi_{a_1 c_1}(s;\pi_f) \overrightarrow{\boldsymbol{Z}}^{(N)}(1-s,w+2s-1; \chi_{\emph{Div}(N)}, \chi_{a_1 c_1};\pi_f).
\end{equation}

For each $a_2 c_2 \in \emph{Div}(N)$, there exists a 
$4 \mathsf{rad}(N) \times 4 \mathsf{rad}(N)$ matrix $\Psi_{a_2 c_2}(w;\pi_f)$ of meromorphic 
functions in $w$ such that 
for all $(s,w) \in \Omega_2$ we have 
\begin{equation} \label{exacfunc2}
\overrightarrow{\boldsymbol{Z}}^{(N)}(s,w; \chi_{a_2 c_2}, \chi_{\emph{Div}(N)};\pi_f)= 
\Psi_{a_2 c_2}(w;\pi_f) \overrightarrow{\boldsymbol{Z}}^{(N)}(s+w-1/2,1-w; \chi_{a_2 c_2}, \chi_{\emph{Div}(N)};\pi_f).
\end{equation}
\end{lemma}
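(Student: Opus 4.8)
The plan is to derive both matrix functional equations \eqref{exacfunc1} and \eqref{exacfunc2} from the two Dirichlet series representations in Lemma \ref{rep1lem}, by applying the known functional equations of the individual $L$-functions appearing in each representation, together with the functional equations of the correction polynomials from Lemma \ref{corrfunc}. The two assertions are symmetric in structure: \eqref{exacfunc1} comes from representation \eqref{rep1} and the functional equation \eqref{functionaleq2} for $L(s,\pi_f\otimes\chi_{a_1c_1d_0})$, while \eqref{exacfunc2} comes from representation \eqref{rep2} and the Dirichlet functional equation \eqref{dirtwistfunc} for $L(w,\chi_{a_2c_2}\widetilde\chi_{n_0})$. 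I will treat \eqref{exacfunc1} in detail and indicate that \eqref{exacfunc2} follows \emph{mutatis mutandis}.

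First I would fix $a_1 c_1 \in \mathrm{Div}(N)$ and work in the region of absolute convergence $\Re(s), \Re(w) \gg 1$, where \eqref{rep1} holds term by term. For each $d = d_0 d_1^2$ with $(d,2N)=1$, I apply the functional equation \eqref{functionaleq2} to $L^{(2N)}(s, \pi_f \otimes \chi_{a_1 c_1 d_0})$. Because $\pi_f$ is twist-minimal and $(d_0, 2N) = 1$, the conductor of $\pi_f \otimes \chi_{a_1 c_1 d_0}$ is a controlled multiple of $d_0^2$ (as in \cite[Proposition~3.1]{AL}), and the archimedean and ramified Euler factors that were removed in passing to $L^{(2N)}$ must be reinstated; this produces a ratio of gamma factors, a power of $d_0^2$ (hence, combined with \eqref{Pfunc}, the correct power of $d = d_0 d_1^2$), the root number $\varepsilon(\pi_f) \chi_{a_1 c_1 d_0}(N) g_{\chi}^2$, and a finite sum over the finitely many ramified characters $\chi_{a_2 c_2}$ that arise when one re-expresses $\chi_{a_1 c_1 d_0}(N)$ and the local factors at $p \mid 2N$ in terms of the basis $\{\chi^{(j)}\}$ indexing $\mathrm{Div}(N)$. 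Collecting the pieces, summing over $d$, and using \eqref{Pfunc} to replace $\mathcal{P}_d(s,\cdot)$ by $d_1^{2-4s}\mathcal{P}_d(1-s,\cdot)$, I would recognize the resulting expression as $\Phi_{a_1 c_1}(s;\pi_f)$ applied to the vector $\overrightarrow{\boldsymbol{Z}}^{(N)}(1-s, w+2s-1; \chi_{\mathrm{Div}(N)}, \chi_{a_1 c_1};\pi_f)$, where the shift $w \mapsto w + 2s - 1$ appears because the exponent of $d$ in each summand changes by $2s-1$. This identifies the matrix $\Phi_{a_1 c_1}(s;\pi_f)$ explicitly and establishes \eqref{exacfunc1} in the domain of absolute convergence; Lemma \ref{rep1lem} then extends the identity to all of $\Omega_1$ by analytic continuation, since both sides are meromorphic there.

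For \eqref{exacfunc2} I would run the identical argument starting from \eqref{rep2}: apply \eqref{dirtwistfunc} to each $L^{(2N)}(w, \chi_{a_2 c_2}\widetilde\chi_{n_0})$, reinstate the Euler factors at $p \mid 2N$, use quadratic reciprocity \eqref{quadrep} to handle the conductor of $\widetilde\chi_{n_0}$, invoke \eqref{Qfunc2} to flip the correction polynomial $\widetilde{\mathcal{Q}}_n$, and repackage the sum over $n = n_0 n_1^2$ to read off $\Psi_{a_2 c_2}(w;\pi_f)$ and the shift $s \mapsto s + w - \tfrac12$. The passage to all of $\Omega_2$ again uses Lemma \ref{rep1lem}.

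The main obstacle is bookkeeping rather than conceptual: precisely tracking how the ``impurities'' at the primes $p \mid 2N$ — the removed local Euler factors $L(s,\pi_{f,p})$, the parity factor $\mathfrak a$ in \eqref{dirtwistfunc}, the factor $\chi_d(-1)=1$ versus the $4d_0$-conductor dichotomy, and the Gauss-sum root numbers $\chi_{a_1 c_1 d_0}(N)g_\chi^2$ — reorganize into a genuine linear action that mixes the $4\,\mathsf{rad}(N)$ components of the vector and produces an honest matrix with meromorphic entries (the gamma-factor ratios being the only source of poles). One must check that this reorganization is consistent, i.e. that the finitely many characters produced all lie in $\mathrm{Div}(N)$ and that the twisted multiplicativity \eqref{twistmult} is respected, so that no cross-terms outside the stated vector appear. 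Since a version of this is already implicit in \cite{HK} and the analogous $\mathrm{GL}_1$ computation is carried out in \cite{DGH,Blo}, I would cite those for the routine verifications and emphasize only the modifications forced by the ramified factors of $\pi_f$ and the cubefree factorisation \eqref{factorise}.
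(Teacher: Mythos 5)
Your proposal is essentially correct and is the same argument the paper uses — the paper does not actually give a standalone proof of Lemma~\ref{matrixfunc} (it is deferred with a citation to \cite{HK}), but the content is worked out in the proof of Lemma~\ref{matrixfunc2}, and your ingredients match those exactly: the representations \eqref{rep1}, \eqref{rep2}, the functional equations \eqref{functionaleq2}, \eqref{dirtwistfunc}, the correction-polynomial identities \eqref{Pfunc}, \eqref{Qfunc2}, and an expansion of the remaining $d\pmod{8\mathsf{rad}(N)}$-dependent data (root numbers, $2$-part of the conductor, ramified local Euler factors) in quadratic characters. The only stylistic difference is ordering: you apply the functional equation termwise and then Fourier-expand the residual $d$-dependence in characters $\chi_{a_2'c_2'}$, whereas the paper first sieves over representatives $D\in C_{8\mathsf{rad}(N)}$ so that the root number and conductor become constant on each piece before the functional equation is invoked; these are interchangeable presentations of the same computation, the paper's having the mild advantage of deferring the character-orthogonality bookkeeping to a single clean step and producing the explicit entries of $\Phi$ and $\Psi$ as a byproduct.
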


For twist minimal $f \in S^{\text{new}}_{\ell}(N)$ (trivial nebentypus), \cite[Proposition~2.8]{LW} implies that
\begin{equation} \label{Nzero}
N_0=\prod_{\substack{p \mid N \\ \pi_{f,p} \text{ special} \\ \text{representation}}} p,
\end{equation}
and 
\begin{equation} \label{None}
N_1=\prod_{\substack{p \mid N \\ \pi_{f,p} \text{ supercuspidal} \\ \text{representation}}} p.
\end{equation}
For each $p \mid N_0$, let $\alpha_p$ be the Satake parameter attached to the special representation 
$\pi_{f,p}$. For these primes we have $\alpha_p^2=p^{-1}$.

For $c_1,c_2,c_2^{\prime} \mid \mathsf{rad}(N)$,
define
\begin{equation} \label{specialprod}
\mathcal{N}(\pi_f)_{c_1 c_2 c_2^{\prime}}:=\prod_{\substack{p \mid N 
 \\ \text{ord}_p \big(c_2 c_2^{\prime} \mathfrak{c} (\pi_f \otimes \chi_{\chi_{-1}(c_1)c_1}) \big) \text{ odd}   }}  \hspace{-0.5cm} p.
\end{equation}
For a Dirchlet character $\chi$, let $\widetilde{\mathfrak{c}}(\chi):=(\mathfrak{c}(\chi),8)$.
 
\begin{lemma} \label{matrixfunc2}
Let $N$ be as in \eqref{factorise}, $f \in \mathcal{S}^{\emph{new}}_{\ell}(N)$ be twist minimal,
and $\mathcal{N}(\pi_f)_{c_1 c_2 c_2^{\prime}}$ 
be as in \eqref{specialprod}. Then for 
$a_1 c_1, a_1^{\prime} c_1^{\prime}, a_2 c_2, a_2^{\prime} c_2^{\prime} \in \emph{Div}(N)$, 
we have the formulae
\begin{align} \label{tildephi}
\Phi&_{a_1 c_1}(s;\pi_f)_{a_2c_2a_2^{\prime}c_2^{\prime}} \nonumber \\
&=2^{-2} \varepsilon(\pi_f \otimes \chi_{\chi_{-1}(c_1) c_1}) 
\cdot \delta_{\mathcal{N}(\pi_f)_{c_1 c_2 c_2^{\prime} }  \mid \frac{N_0}{(c_1,N_0)}}
\cdot \chi_{\chi_{-1}(c_1) a_1} \big({-{\mathfrak{c}(\pi_f \otimes \chi_{\chi_{-1}(c_1)c_1}})} \big) \nonumber \\
& \times \big({\mathfrak{c}(\pi_f \otimes \chi_{\chi_{-1}(c_1)c_1}}) \big)^{1/2-s} \pi^{-1+2s} 
 \frac{\Gamma \Big(\frac{1-s+\frac{\ell-1}{2}}{2} \Big)
 \Gamma \Big(\frac{1-s+\frac{\ell+1}{2}}{2} \Big)}
{\Gamma \Big(\frac{s+\frac{\ell-1}{2}}{2} \Big) \Gamma \Big(\frac{s+\frac{\ell+1}{2}}{2} \Big)} 
\chi_{a_1 c_1} \big(\mathcal{N}(\pi_f)_{c_1 c_2 c_2^{\prime}} \big) \nonumber \\
& \times \Bigg \{ \widetilde{\mathfrak{c}}(\chi_{\chi_{-1}(c_1) a_1})^{1-2s} 
\Bigg[ \frac{L(1-s,\pi_{f,2} \otimes \chi_{a_1 c_1})}{L(s,\pi_{f,2} \otimes \chi_{a_1 c_1})} +\chi_{a_2 a_2^{\prime} }(5)
\frac{L(1-s,\pi_{f,2} \otimes \chi_{5 a_1 c_1 })}{L(s,\pi_{f,2} \otimes \chi_{5 a_1 c_1})} \Bigg] \nonumber \\
& + \widetilde{\mathfrak{c}}(\chi_{\chi_{-1}(c_1) a_1 3  } )^{1-2s} \chi_{\chi_{-1}(\mathfrak{c}(\pi_f \otimes \chi_{\chi_{-1}(c_1) c_1}) 
\cdot \mathcal{N}(\pi_f)_{c_1 c_2 c_2^{\prime}} )}(3) \nonumber  \\
& \times \Bigg [\chi_{a_2 a_2^{\prime}}(3) \frac{L(1-s,\pi_{f,2} \otimes \chi_{3a_1 c_1})}{L(s,\pi_{f,2} \otimes \chi_{3a_1 c_1})}
+\chi_{a_2 a_2^{\prime}}(7) \frac{L(1-s,\pi_{f,2} \otimes \chi_{7a_1 c_1})}{L(s,\pi_{f,2} \otimes \chi_{7a_1 c_1})} \Bigg ]  \Bigg \} \nonumber \\
& \times \prod_{p \mid \mathcal{N}(\pi_f)_{c_1 c_2 c_2^{\prime}} }
\frac{\alpha_p  \left(p^{-(1-s)}-p^{-s} \right)}{1-p^{-3+2s}} \cdot 
\prod_{p \mid \frac{N_0}{(c_1,N_0)}/\mathcal{N}(\pi_f)_{c_1 c_2 c_2^{\prime}}  } 
 \frac{1-p^{-2}}{1-p^{-3+2s}},
\end{align}
and
\begin{align} \label{tildepsi}
\Psi&_{a_2 c_2}(w;\pi_f)_{a_1c_1a_1^{\prime}c_1^{\prime}} \nonumber \\
&=2^{-2} c_2^{\frac{1}{2}-w} \pi^{-\frac{1}{2}+w}
\chi_{a_2 c_2} \Big( \frac{c_1 c_1^{\prime}}{(c_1,c_1^{\prime})^2} \Big) 
\prod_{ p \mid \frac{c_1 c_1^{\prime}}{(c_1,c_1^{\prime})^2}}  \frac{p^{-(1-w)}-p^{-w}}{1-p^{-2(1-w)}}    
\prod_{p \mid \mathsf{rad}(N) / \frac{c_1 c_1^{\prime}}{(c_1,c_1^{\prime})^2 }} 
 \frac{1-p^{-1}}{1-p^{-2(1-w)}} \nonumber  \\
 & \times
 \begin{cases}
\widetilde{\mathfrak{c}}(\chi_{a_2 c_2})^{\frac{1}{2}-w} \frac{\Gamma(\frac{1-w}{2})}{\Gamma(\frac{w}{2})}
\Big [\frac{L_2(1-w,\chi_{a_2 c_2})}{L_2(w,\chi_{a_2 c_2})} 
+ 
\chi_{a_1 a_1^{\prime}}(5)  \frac{L_2(1-w,\chi_{a_2 c_2} \widetilde{\chi}_5)}{L_2(w,\chi_{a_2 c_2} \widetilde{\chi}_5)} \Big] \vspace{0.2cm} \\
+ \widetilde{\mathfrak{c}}(\chi_{-3a_2 c_2})^{\frac{1}{2}-w} \frac{\Gamma(\frac{2-w}{2})}{\Gamma(\frac{1+w}{2})}
\Big [ \chi_{a_1 a_1^{\prime}}(3) \frac{L_2(1-w,\chi_{a_2 c_2} \widetilde{\chi}_3 )}{L_2(w,\chi_{a_2 c_2} \widetilde{\chi}_3)} 
+ 
\chi_{a_1 a_1^{\prime}}(7)  \frac{L_2(1-w,\chi_{a_2 c_2} \widetilde{\chi}_7 )}{L_2(w,\chi_{a_2 c_2} \widetilde{\chi}_7)} \Big] , & a_2 \in \{1,2\} \\
\widetilde{\mathfrak{c}}(\chi_{a_2 c_2})^{\frac{1}{2}-w} \frac{\Gamma(\frac{2-w}{2})}{\Gamma(\frac{w+1}{2})}
\Big [\frac{L_2(1-w,\chi_{a_2 c_2})}{L_2(w,\chi_{a_2 c_2})} 
+ 
\chi_{a_1 a_1^{\prime}}(5)  \frac{L_2(1-w,\chi_{a_2 c_2} \widetilde{\chi}_5)}{L_2(w,\chi_{a_2 c_2} \widetilde{\chi}_5)} \Big] \vspace{0.2cm} \\
+ \widetilde{\mathfrak{c}}(\chi_{-3a_2 c_2})^{\frac{1}{2}-w} \frac{\Gamma(\frac{1-w}{2})}{\Gamma(\frac{w}{2})}
\Big [ \chi_{a_1 a_1^{\prime}}(3) \frac{L_2(1-w,\chi_{a_2 c_2} \widetilde{\chi}_3 )}{L_2(w,\chi_{a_2 c_2} \widetilde{\chi}_3)} 
+ 
\chi_{a_1 a_1^{\prime}}(7)  \frac{L_2(1-w,\chi_{a_2 c_2} \widetilde{\chi}_7 )}{L_2(w,\chi_{a_2 c_2} \widetilde{\chi}_7)} \Big] 
,   & a_2 \in \{-1,-2\}.
 \end{cases}
 \end{align}
\end{lemma}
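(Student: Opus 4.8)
The plan is to derive both \eqref{tildephi} and \eqref{tildepsi} by feeding the two absolutely convergent representations of $Z^{(N)}(s,w;\chi_{a_2c_2},\chi_{a_1c_1};\pi_f)$ from Lemma~\ref{rep1lem} into the functional equations of their constituents, and then resumming the result into the vector form of Lemma~\ref{matrixfunc}. For \eqref{tildephi} I would start from representation \eqref{rep1}, valid on $\Omega_1$, and apply to each term the functional equation \eqref{functionaleq2} for $L(s,\pi_f\otimes\chi_{a_1c_1d_0})$ together with \eqref{Pfunc} for the correction polynomial $\mathcal{P}_d(s,\chi_{a_1c_1};\pi_f)$. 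Because \eqref{rep1} carries only the incomplete $L$-function $L^{(2N)}$, one first restores the Euler factors at $p\mid 2N$ to pass to the completed $\Lambda(s,\pi_f\otimes\chi_{a_1c_1d_0})$, invokes its functional equation, and then strips those bad factors off on the $1-s$ side. This produces the archimedean $\Gamma$-ratio, the power $(\mathfrak{c}(\pi_f\otimes\chi_{\chi_{-1}(c_1)c_1}))^{1/2-s}\pi^{-1+2s}$, and the root number $\varepsilon(\pi_f\otimes\chi_{\chi_{-1}(c_1)c_1})$ via \eqref{twistroot1}--\eqref{twistroot2}; the $d_0$-dependent Gauss sums furnished by \eqref{twistroot1} are precisely what turns the sum over $d$ into a sum over the \emph{new} first character, and the $d_0^2$ in $\mathfrak{c}(\pi_f\otimes\chi_{a_1c_1d_0})$ together with the $d_1^{2-4s}$ from \eqref{Pfunc} is precisely what produces the shift $(s,w)\mapsto(1-s,w+2s-1)$. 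Thus the output is $\Phi_{a_1c_1}(s;\pi_f)\,\overrightarrow{\boldsymbol{Z}}^{(N)}(1-s,w+2s-1;\chi_{\text{Div}(N)},\chi_{a_1c_1};\pi_f)$ with $\Phi_{a_1c_1}(s;\pi_f)$ a function of $s$ alone.

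For \eqref{tildepsi} I would run the analogous argument from representation \eqref{rep2}, valid on $\Omega_2$, applying the Dirichlet functional equation \eqref{dirtwistfunc} to $L^{(2N)}(w,\chi_{a_2c_2}\widetilde{\chi}_{n_0})$ together with \eqref{Qfunc2} for $\widetilde{\mathcal{Q}}_n(w,\chi_{a_2c_2};\pi_f)$. Here the parity parameter $\mathfrak{a}$ in \eqref{dirtwistfunc} is governed by whether $\chi_{a_2c_2}\widetilde{\chi}_{n_0}$ is even or odd, which (using $\chi_d(-1)=1$ and the sign conventions for the mod-$8$ characters $\chi_a$) reduces to the sign of $a_2$; this is the source of the two cases $a_2\in\{1,2\}$ and $a_2\in\{-1,-2\}$ with their two distinct $\Gamma$-ratios. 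Quadratic reciprocity \eqref{quadrep} converts $\widetilde{\chi}_{n_0}$ back to a Kronecker symbol, and resummation over $n_0$ and over the $\mathsf{rad}(N)$-part of $n$ produces the new second character ranging over $\text{Div}(N)$, the dual incomplete Dirichlet $L$-function together with the surviving Euler-$2$ ratios $L_2(1-w,\cdot)/L_2(w,\cdot)$, and the factor $c_2^{1/2-w}\pi^{-1/2+w}$ with the $\mathsf{rad}(N)$-products governed by $c_1c_1^{\prime}/(c_1,c_1^{\prime})^2$; note that these $\mathsf{rad}(N)$-products carry no automorphic data because \eqref{rep2} has none at those primes, in contrast with the $\Phi$ case.

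The substantive content is the local computation at the ramified places $p\mid 2N$. At $p=2$ one performs a discrete Fourier analysis over the residues mod $8$: the four characters $\chi_{\pm1},\chi_{\pm2}$ mix under the functional equation, and collecting contributions mod $8$ yields the conductor factors $\widetilde{\mathfrak{c}}(\cdot)^{1-2s}$ (resp.\ $\widetilde{\mathfrak{c}}(\cdot)^{1/2-w}$) and the linear combinations weighted by $\chi_{a_2a_2^{\prime}}(3),\chi_{a_2a_2^{\prime}}(5),\chi_{a_2a_2^{\prime}}(7)$ (resp.\ $\chi_{a_1a_1^{\prime}}(3),\chi_{a_1a_1^{\prime}}(5),\chi_{a_1a_1^{\prime}}(7)$). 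At $p\mid N_0$ one uses \eqref{Nzero} and $\alpha_p^2=p^{-1}$ for the special representation: the local factor of \eqref{rep1} there is $L(s,\pi_{f,p}\otimes\chi)=(1-\alpha_p\chi(p)p^{-s})^{-1}$ when $\chi$ is unramified at $p$ and $1$ otherwise, while $P_j$ contributes a geometric factor, and carrying out the functional equation locally gives either $\alpha_p(p^{-(1-s)}-p^{-s})/(1-p^{-3+2s})$ or $(1-p^{-2})/(1-p^{-3+2s})$ according to the parity of $\ord_p(c_2c_2^{\prime}\mathfrak{c}(\pi_f\otimes\chi_{\chi_{-1}(c_1)c_1}))$ --- which is exactly how the product $\mathcal{N}(\pi_f)_{c_1c_2c_2^{\prime}}$ of \eqref{specialprod} and the divisibility indicator $\delta_{\mathcal{N}(\pi_f)_{c_1c_2c_2^{\prime}}\mid N_0/(c_1,N_0)}$ enter. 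At $p\mid N_1$ (supercuspidal, by \eqref{None}) the local $L$-factor is trivial and twisting by a ramified-at-$p$ character only alters the conductor, so these contributions are absorbed into $\mathfrak{c}(\pi_f\otimes\chi_{\chi_{-1}(c_1)c_1})$.

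I expect the main obstacle to be exactly this simultaneous bookkeeping at $p=2$ and at $p\mid N$: one must track all $8\,\mathsf{rad}(N)$ residue classes, verify that after resummation the output genuinely has the matrix form $\Phi_{a_1c_1}(s;\pi_f)\,\overrightarrow{\boldsymbol{Z}}^{(N)}$ with $\Phi_{a_1c_1}$ depending on $s$ alone (and symmetrically for $\Psi_{a_2c_2}$), and confirm that the constraint defining $\mathcal{N}(\pi_f)_{c_1c_2c_2^{\prime}}$ is consistent with the factorisation \eqref{factorise} so that no spurious polar or residual terms survive. This is a lengthy but essentially routine elaboration of the computations in \cite[Section~4]{DGH} and \cite[Section~3]{Di}, specialised to the conductor structure \eqref{factorise} and the ramified local representations $\pi_{f,p}$ recorded in \eqref{Nzero}--\eqref{None}.
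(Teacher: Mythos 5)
Your plan follows the same route as the paper: on $\Omega_1$ open $Z^{(N)}$ via the representation \eqref{rep1}, complete each $L^{(2N)}(s,\pi_f\otimes\chi_{a_1c_1d_0})$ to a $\Lambda_N$ by restoring the Euler factors at $p\mid 2N$, apply \eqref{functionaleq2} together with \eqref{Pfunc}, strip the bad Euler factors on the $1-s$ side, and resum using orthogonality of characters on $\bigl(\mathbb{Z}/8\,\mathsf{rad}(N)\mathbb{Z}\bigr)^{\times}\!\big/\bigl(\mathbb{Z}/8\,\mathsf{rad}(N)\mathbb{Z}\bigr)^{\times 2}$ (which is exactly what the paper encodes via the representatives $C_{8\mathsf{rad}(N)}$ and the averaging in \eqref{rewrite}); the local computation at $p\mid N_0$ using $\alpha_p^2=p^{-1}$ and the parity constraint that produces $\mathcal{N}(\pi_f)_{c_1c_2c_2'}$ and the indicator $\delta_{\mathcal{N}(\pi_f)_{c_1c_2c_2'}\mid N_0/(c_1,N_0)}$, the triviality of the supercuspidal local factors at $p\mid N_1$, and the $\mathfrak{a}$-dependence of the $\Gamma$-ratios in \eqref{tildepsi} via the parity of $\chi_{a_2c_2}\widetilde{\chi}_{n_0}$ are all handled as in the paper, with \eqref{tildepsi} following by the analogous simpler argument from \eqref{rep2}. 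One small imprecision: the Gauss-sum statement you attribute to \eqref{twistroot1} is used by the paper in the packaged form \eqref{twistroot2}, i.e.\ $\varepsilon(\pi_f\otimes\chi_{\chi_{-1}(c_1)c_1})\,\chi_{\chi_{-1}(c_1)a_1D}\bigl(-\mathfrak{c}(\pi_f\otimes\chi_{\chi_{-1}(c_1)c_1})\bigr)$, so the $d_0$-dependence enters through character values of $\chi_{D}$ rather than through explicit Gauss sums — but this does not affect the argument.
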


\begin{proof}
These computations are a generalisation of the ideas in proof of \cite[Theorem~2.3]{DW}.

Fix a full set of squarefree positive representatives for
 $\sfrac{(\frac{\mathbb{Z}}{8 \mathsf{rad}(N) \mathbb{Z}})^{\times}}{(\frac{\mathbb{Z}}{8 \mathsf{rad}(N) \mathbb{Z}})^{\times 2}}$, 
 denote it by $C_{8 \mathsf{rad}(N)}$. 

 In order to use \eqref{functionaleq2}, write
 \begin{equation*}
 \pi_f \otimes \chi_{acd_0}=(\pi_f \otimes \chi_{\chi_{-1}(c)c}) \otimes \chi_{\chi_{-1}(c) ad_0}.
 \end{equation*}
Denote
\begin{align*}
\Lambda_N (s,\pi_f \otimes \chi_{a c d_0})&:= 
\Bigg( \frac{\pi}{\widetilde{\mathfrak{c}}(\chi_{\chi_{-1}(c) ad_0})  \sqrt{\mathfrak{c}(\pi_f \otimes \chi_{\chi_{-1}(c)c}  )}} \Bigg)^{-s}  
 \Gamma \Big(\frac{s+\frac{\ell-1}{2}}{2} \Big) \Gamma \Big(\frac{s+\frac{\ell+1}{2}}{2} \Big) \\
& \times \prod_{\substack{p \mid 2N \\ p \nmid \mathfrak{c}(\chi_{acd_0})  }} L(s, \pi_{f,p} \otimes \chi_{a c d_0}). 
\end{align*}

For $p \mid N_1$ we have $L(s,\pi_{f,p})=1$, and for $p \mid N_0$ 
we have $L(s,\pi_{f,p})=(1-\alpha_p p^{-s})^{-1}$. Thus
\begin{equation} \label{localprod}
\prod_{\substack{p \mid 2 N \\ p \nmid \mathfrak{c}(\chi_{acd_0})}} L(s, \pi_{f,p} \otimes \chi_{a c d_0})=
L(s,\pi_{f,2} \otimes \chi_{acd_0})
\cdot \prod_{\substack{p \mid N_0 \\ p \nmid \mathfrak{c}(\chi_{acd_0}) }} \frac{1}{1-\alpha_p \chi_{acd_0}(p)  p^{-s}}.
\end{equation}

For $D \in C_{8 \mathsf{rad}(N)}$ and $(s,w) \in \Omega_1$, the linear combination 
\begin{equation*}
2^{-\omega( \mathsf{rad}(N))-2} \chi_{a_2 c_2}(D) 
 \sum_{a_2^{\prime} c_2^{\prime} \in \text{Div}(N)} 
\chi_{a_2^{\prime} c_2^{\prime}}(D) Z^{(N)}(s,w; \chi_{a_2^{\prime} c_2^{\prime}}, \chi_{a_1 c_1};\pi_f)
\end{equation*} 
isolates summands the summands of $Z^{(N)}(s,w; \chi_{a_2 c_2}, \chi_{a_1 c_1};\pi_f)$ that have 
$d D $ congruent to a square modulo $8 \mathsf{rad}(N)$. We now combine \eqref{functionaleq2}
and Lemma \ref{corrfunc} to obtain the following observation. For $(s,w) \in \Omega_1$,
the function 
\begin{equation} \label{uniform1}
\Lambda_N(s,\pi_f \otimes \chi_{a_1 c_1 D}) 
\sum_{a_2^{\prime} c_2^{\prime} \in \text{Div}(N)} 
\chi_{a_2^{\prime} c_2^{\prime}}(D)  Z^{(N)}(s,w; \chi_{a_2^{\prime} c_2^{\prime}}, \chi_{a_1 c_1};\pi_f), 
\end{equation}
is invariant under the involution $\gamma_1$ (note that $\gamma_1(\Omega_1)=\Omega_1$),
exactly up to the root number (cf. \eqref{twistroot2})
\begin{equation} \label{rootnum}
\varepsilon(\pi_f \otimes \chi_{\chi_{-1}(c_1)  c_1}) 
\chi_{\chi_{-1}(c_1) a_1 D} \big({-\mathfrak{c}(\pi_f \otimes \chi_{\chi_{-1}(c_1)c_1}} ) \big).
\end{equation}

For $(s,w) \in \Omega_1$, write
\begin{align} \label{rewrite}
Z^{(N)}(s,w; \chi_{a_2 c_2}, \chi_{a_1 c_1};\pi_f) 
&=2^{-\omega(\mathsf{rad}(N))-2} \sum_{D \in C_{8 \mathsf{rad}(N)}} \chi_{a_2 c_2}(D) \nonumber \\
& \times \sum_{a_2^{\prime} c_2^{\prime} \in \text{Div}(N) } \chi_{a_2^{\prime} c_2^{\prime}}(D) 
Z^{(N)}(s,w;\chi_{a_2^{\prime} c_2^{\prime}}, \chi_{a_1 c_1};\pi_f).
\end{align}

Using \eqref{uniform1}, \eqref{rootnum} and \eqref{rewrite}, 
we see that the following holds for all $(s,w) \in \Omega_1$,
\begin{align}  \label{intermedeq}
Z^{(N)}(s,w; \chi_{a_2 c_2},\chi_{a_1 c_1};\pi_f)
&=2^{-\omega(\mathsf{rad}(N))-2} \varepsilon(\pi_f \otimes \chi_{\chi_{-1}(c_1) c_1}) \chi_{\chi_{-1}(c_1) a_1} 
\big({-\mathfrak{c}(\pi_f \otimes \chi_{\chi_{-1}(c_1)c_1}})\big) \nonumber \\
& \times \sum_{a_2^{\prime} c_2^{\prime} \in \text{Div}(N)} 
Z^{(N)}(1-s,w+2s-1;\chi_{a_2^{\prime} c_2^{\prime}}, \chi_{a_1 c_1};\pi_f)  \nonumber \\
& \times \sum_{D \in C_{8 \mathsf{rad}(N)}} \chi_{a_2 c_2}(D) \chi_{a_2^{\prime} c_2^{\prime}}(D) 
 \chi_{D} \big({-\mathfrak{c}(\pi_f \otimes \chi_{\chi_{-1}(c_1)c_1}}) \big) \nonumber \\
& \times \frac{\Lambda_N(1-s,\pi_f \otimes \chi_{a_1 c_1 D})}{\Lambda_N(s,\pi_f \otimes \chi_{a_1 c_1 D})}.
\end{align}

Thus by \eqref{localprod},
\begin{align}
\frac{\Lambda_N(1-s,\pi_f \otimes \chi_{a_1 c_1 D})}{\Lambda_N(s,\pi_f \otimes \chi_{a_1 c_1 D})}
&=\pi^{-1+2s} \cdot \widetilde{\mathfrak{c}} \big( \chi_{\chi_{-1}(c_1) a_1 D} \big)^{1-2s} 
\cdot \big(\mathfrak{c} \big(\pi_f \otimes \chi_{\chi_{-1}(c_1)c_1}) \big)^{1/2-s} \nonumber  \\ 
& \times \frac{\Gamma \Big(\frac{1-s+\frac{\ell-1}{2}}{2} \Big) \Gamma \Big(\frac{1-s+\frac{\ell+1}{2}}{2} \Big)}
{\Gamma \Big(\frac{s+\frac{\ell-1}{2}}{2} \Big) \Gamma \Big(\frac{s+\frac{\ell+1}{2}}{2} \Big)} 
\cdot \frac{L(1-s,\pi_{f,2} \otimes \chi_{a_1 c_1 D})}{L(s,\pi_{f,2} \otimes \chi_{a_1 c_1 D})}
\nonumber \\ 
& \times \prod_{\substack{p \mid N_0 \\ p \nmid c_1 }} 
\frac{1-p^{-2}+\alpha_p \chi_{a_1 c_1 D}(p)(p^{-(1-s)}-p^{-s} ) }{1- p^{-3+2s}}. \label{product}
\end{align}

After substitution of \eqref{product} into \eqref{intermedeq}, and expansion of the products, we see that 
each term will vanish when the summation over $D$ is performed unless the total character in $D$
is trivial. This is only possible for summands corresponding to $c_2^{\prime} \mid \mathsf{rad}(N)$ that
satisfy 
\begin{equation*}
\mathcal{N}(\pi_f)_{c_1 c_2 c_2^{\prime} }  \mid \frac{N_0}{(c_1,N_0)}.
\end{equation*}
Thus we obtain \eqref{tildephi}.

Note that \eqref{tildepsi} follows from an analogous, but simpler computation.
\end{proof}

\begin{prop} \label{merocont}
Let $f \in \mathcal{S}^{\emph{new}}_{\ell}(N)$ be twist minimal.
For each $a_1 c_1, a_2 c_2 \in \emph{Div}(N)$, the function
\begin{equation} \label{holofunc}
w(w-1)(2s+w-1)(2s+w-2) Z^{(N)}(s,w;\chi_{a_2 c_2}, \chi_{a_1 c_1};\pi_f)
\end{equation}
has a holomorphic extension to all of $\mathbb{C}^2$. For each
$(z,w) \in \mathbb{C}^2$, there exists a constant 
$C:=C \big ({\Re(z),\Re(w),\ell} \big)$ such that 
we have
\begin{align} \label{modintermedpolar}
\big | w (w-1)(2s+w-1)(&2s+w-2) Z^{(N)}(s,w;\chi_{a_2 c_2}, \chi_{a_1 c_1};\pi_f) \big | \nonumber \\
& \ll_{\pi_f,c_1,c_2,\Re(z),\Re(w)} \Big[ \big(1+|\Im(s)| \big) \big(1+|\Im(w)| \big) \Big]^{C}.   
\end{align}
\end{prop}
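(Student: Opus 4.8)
The plan is to continue $Z^{(N)}(s,w;\chi_{a_2c_2},\chi_{a_1c_1};\pi_f)$ to all of $\mathbb{C}^2$ by iterating the two functional equations of Lemma~\ref{matrixfunc}, then invoking Bochner's tube theorem, and to obtain the polynomial bound by combining Stirling's formula for the Gamma factors appearing in the scattering matrices with a Phragm\'{e}n--Lindel\"{o}f argument. The starting input is Lemma~\ref{rep1lem}: on $\Omega_1\cup\Omega_2$ the function $(w-1)Z^{(N)}(s,w;\chi_{a_2c_2},\chi_{a_1c_1};\pi_f)$ is holomorphic, and on $\Omega_1$ (resp. $\Omega_2$) it equals the absolutely convergent series \eqref{rep1} (resp. \eqref{rep2}); in particular there it is bounded on compact vertical tubes, with polynomial dependence on $(\Im s,\Im w)$ coming only from trivial estimates for the finitely many Euler factors at primes $p\mid 2N$ and from the correction polynomials via Lemma~\ref{corrbound}.

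Next I would iterate: the group $W=\langle\gamma_1,\gamma_2\rangle$ has order $8$, and for each $\sigma\in W$, composing \eqref{exacfunc1}--\eqref{exacfunc2} (using $\gamma_1(\Omega_1)=\Omega_1$ and $\gamma_2(\Omega_2)=\Omega_2$) expresses $\overrightarrow{\boldsymbol{Z}}^{(N)}$ on $\sigma(\Omega_1\cup\Omega_2)$ as a product of scattering matrices evaluated along the $W$-orbit of $(s,w)$, times $\overrightarrow{\boldsymbol{Z}}^{(N)}$ on $\Omega_1\cup\Omega_2$. Using the explicit formulas of Lemma~\ref{matrixfunc2} I would check that every entry of every such matrix product is meromorphic on $\mathbb{C}^2$ with poles confined to the four hyperplanes $w=0$, $w=1$, $2s+w=1$, $2s+w=2$. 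The key point is that the Gamma-quotient poles of $\Phi_{a_1c_1}$ (resp. $\Psi_{a_2c_2}$) are absorbed by the trivial zeros of the twisted $L$-functions forced by the completed functional equation, while the ratios of ramified Euler factors, the factors $(1-p^{-3+2s})^{-1}$ and $(1-p^{-2(1-w)})^{-1}$, and the $\widetilde{\mathfrak{c}}$-factors either are holomorphic and nonvanishing in the relevant half-planes or are again matched by vanishing of the convergent series \eqref{rep1}, \eqref{rep2}; the cleanest bookkeeping is to carry the entire completed objects $\Lambda_N(s,\pi_f\otimes\chi_{a_1c_1D})$ from the proof of Lemma~\ref{matrixfunc2} through the iteration. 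Consequently $w(w-1)(2s+w-1)(2s+w-2)Z^{(N)}$ is holomorphic on $U:=\bigcup_{\sigma\in W}\sigma(\Omega_1\cup\Omega_2)$.

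I would then check by an elementary computation with the inequalities defining $\Omega_1,\Omega_2$ that the convex hull of the set of real parts of points of $U$ is all of $\mathbb{R}^2$; since $U$ is a tube domain, Bochner's tube theorem extends the holomorphic function $w(w-1)(2s+w-1)(2s+w-2)Z^{(N)}$ from $U$ to the tube over that convex hull, that is, to all of $\mathbb{C}^2$. For the bound \eqref{modintermedpolar}, observe that on each piece $\sigma(\Omega_1\cup\Omega_2)$ the cleared function is a product of bounded Euler factors, correction polynomials bounded via Lemma~\ref{corrbound}, ratios of Gamma factors of at most polynomial growth in vertical strips (Stirling), and a $Z^{(N)}$-factor bounded by the convergent series; hence it is polynomially bounded in $(\Im s,\Im w)$ on each such region, with exponent depending only on $\Re(s),\Re(w),\ell$. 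Being also holomorphic of finite order on $\mathbb{C}^2$, it satisfies \eqref{modintermedpolar} by a two-variable Phragm\'{e}n--Lindel\"{o}f argument (apply the one-variable principle in $s$ with $w$ fixed in a strip, then in $w$), equivalently by the quantitative form of Bochner's theorem.

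The step I expect to be the main obstacle is the pole-tracking in the second paragraph: showing that neither the scattering matrices of Lemma~\ref{matrixfunc2} nor any of their $W$-orbit compositions introduce poles off the four named hyperplanes. The delicate points are pairing the Gamma-quotient poles with the trivial zeros of the twisted $L$-functions and ruling out spurious poles from the ramified Euler-factor ratios and the $\widetilde{\mathfrak{c}}$-factors; working throughout with the entire completed objects $\Lambda_N$ rather than the bare series is what makes this manageable.
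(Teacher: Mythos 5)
Your overall strategy matches the paper's: iterate the two functional equations of Lemma~\ref{matrixfunc} to cover $\mathbb{C}^2$ minus a bounded tube, invoke Bochner's tube theorem to fill the hole, and obtain the polynomial bound from Stirling plus convexity. The paper does exactly this, citing the analogous arguments in Blomer and in Diaconu--Goldfeld--Hoffstein (the latter supplies the quantitative bound you reproduce via Phragm\'{e}n--Lindel\"{o}f).

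Where you diverge is in the pole-tracking step, which you correctly flag as the delicate point but then handle more laboriously than necessary. The paper's key observation is simply that $\Phi_{a_1c_1}(s;\pi_f)$ is holomorphic for $\Re(s)<1$ and $\Psi_{a_2c_2}(w;\pi_f)$ is holomorphic for $\Re(w)<1$ (the Gamma quotients, the ratios of ramified Euler factors, the factors $(1-p^{-3+2s})^{-1}$ and $(1-p^{-2(1-w)})^{-1}$, and the $\widetilde{\mathfrak{c}}$-factors are all nonsingular there). Once this is in hand, one checks that in every region $\sigma(\Omega_1\cup\Omega_2)$ reached by the iteration the argument of each $\Phi$ stays in $\Re(s)<1$ and each $\Psi$ stays in $\Re(w)<1$ (for instance, $\gamma_1(\Omega_2)$ has $\Re(s)<0$ and $\gamma_2(\Omega_1)$ has $\Re(w)<0$), so the only poles that can arise are the $w=1$ pole of the series \eqref{rep2} transported along the orbit, i.e., exactly the four hyperplanes in \eqref{holofunc}. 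Your proposal instead to pair the Gamma-quotient poles against trivial zeros of the twisted $L$-functions, and to carry the completed objects $\Lambda_N$ through the iteration, is not how the paper proceeds and is not needed: those Gamma poles lie in $\Re(s)\geq 1$ and $\Re(w)\geq 1$, which the iteration never visits, and the scattering matrices contain only ratios of local Euler factors, not global $L$-functions, so there is no trivial-zero cancellation to arrange. This is not a gap so much as a misdirected (and harder) route through the one step you identified as the obstacle; adopting the half-plane holomorphy of $\Phi$ and $\Psi$ collapses it to a bookkeeping check, which is the point of the paper's brief proof.
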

\begin{proof}
This a straightforward modification of the 
several complex variable arguments in \cite[Lemma~2]{Blo} and 
\cite[Section~4.3]{DGH}. 

Both \eqref{tildephi} and \eqref{tildepsi} are both holomorphic functions 
for $\Re(s)<1$ and $\Re(w)<1$ respectively. 

One iteratively applies \eqref{exacfunc1} and \eqref{exacfunc2}
to holomorphically extend each of the functions 
in \eqref{holofunc} to all of $\mathbb{C}^2 \setminus P^{\star}$, 
where
\begin{equation*}
P^{\star}:=\{(s,w): (\Re s, \Re w) \in P \} \subseteq \{(s,w): |\Re(s)|^2+|\Re(w)|^2 \leq 10 \} ,
\end{equation*}
and $P \subseteq \mathbb{R}^2$ is a certain closed polygon.
Bochner's Theorem \cite{Bo} then holomorphically extends the functions in 
\eqref{holofunc} to all of $\mathbb{C}^2$. One can then use \cite[Propositions~4.6 and 4.7]{DGH}
and the argument on \cite[pg.~341]{DGH} to establish \eqref{modintermedpolar}.
\end{proof}

\begin{remark}
We point out that the bound \eqref{modintermedpolar} is not used in 
obtaining the main results of this
paper. Its purpose it to ensure absolute convergence for certain contour integrals in $s$ and $w$
after Mellin inversion of smooth functions. 
\end{remark}

\begin{lemma} \label{R1}
Let $f \in \mathcal{S}^{\emph{new}}_{\ell}(N)$,
$a_1 c_1, a_2 c_2 \in \emph{Div}(N)$, and consider 
$Z^{(N)}(s,w;\chi_{a_2 c_2}, \chi_{a_1 c_1};\pi_f)$
on the domain $(s,w) \in \Omega_2$ given in \eqref{omega2}.
If $\chi_{a_2 c_2}$ is non-trivial, then the function  
$Z^{(N)}(s,w;\chi_{a_2 c_2}, \chi_{a_1 c_1};\pi_f)$ is holomorphic on $\Omega_2$.
If $a_2=c_2=1$, then there is a polar hyperplane at $w=1$ with residue
\begin{equation*} 
\emph{Res}_{w=1} Z^{(N)}(s,w;\mathbf{1},\chi_{a_1 c_1};\pi_f)=
L^{(2N)}(2s,\emph{Sym}^2 \pi_f ) \prod_{p \mid 2N}(1-p^{-1}).
\end{equation*}
\end{lemma}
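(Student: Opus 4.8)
The plan is to work from the representation \eqref{rep2}, which is valid on $\Omega_2$, and compute the residue at $w=1$ term by term. Each summand of \eqref{rep2} is
\[
\frac{L^{(2N)}(w,\chi_{a_2c_2}\widetilde{\chi}_{n_0})\,\chi_{a_1c_1}(n)\,\widetilde{\mathcal{Q}}_n(w,\chi_{a_2c_2};\pi_f)}{n^s}.
\]
When $\chi_{a_2c_2}$ is nontrivial, each Dirichlet $L$-function $L(w,\chi_{a_2c_2}\widetilde{\chi}_{n_0})$ is entire (the inducing character is nonprincipal), and the correction polynomials $\widetilde{\mathcal{Q}}_n$ are polynomials; absolute convergence on $\Omega_2$ (guaranteed by Lemma \ref{rep1lem}, via the large sieve bound plus \eqref{Qbound}) then shows the whole series is holomorphic there, giving the first assertion. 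When $a_2=c_2=1$, only the single term $n_0=1$ (i.e.\ $n=n_1^2$ a perfect square) produces a pole, since $\widetilde{\chi}_{n_0}$ is principal exactly when $n_0=1$; for that term $L^{(2N)}(w,\mathbf{1})=\zeta^{(2N)}(w)$ has a simple pole at $w=1$ with residue $\prod_{p\mid 2N}(1-p^{-1})$.

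So the computation reduces to
\[
\mathrm{Res}_{w=1}Z^{(N)}(s,w;\mathbf{1},\chi_{a_1c_1};\pi_f)
=\prod_{p\mid 2N}(1-p^{-1})\sum_{\substack{n_1\ge 1\\(n_1,2N)=1}}\frac{\chi_{a_1c_1}(n_1^2)\,\widetilde{\mathcal{Q}}_{n_1^2}(1,\chi_{a_2c_2};\pi_f)\big|_{a_2=c_2=1}}{n_1^{2s}}.
\]
Note $\chi_{a_1c_1}(n_1^2)=\chi_{a_1c_1}(n_1)^2=1$ for $(n_1,2N)=1$ (the character is real and takes unit values on units coprime to its modulus), so this character drops out. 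What remains is to identify the resulting Dirichlet series in $n_1$ with $L^{(2N)}(2s,\mathrm{Sym}^2\pi_f)$. Since $\widetilde{\mathcal{Q}}_{n_1^2}(1,\mathbf{1};\pi_f)$ is multiplicative in $n_1$ by construction (it is a product over $p\parallel n_1^2$), it suffices to check the Euler factor at each prime $p\nmid 2N$: one must verify
\[
\sum_{k\ge 0}\frac{\widetilde{\mathcal{Q}}_{p^{2k}}(1,\mathbf{1};\pi_f)}{p^{2ks}}
=\big(1-\alpha_p^2 p^{-2s}\big)^{-1}\big(1-p^{-2s}\big)^{-1}\big(1-\beta_p^2p^{-2s}\big)^{-1},
\]
the local $\mathrm{Sym}^2$ factor (recall $\alpha_p\beta_p=1$). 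This is where the explicit shape of $g_{A_3}$ from \eqref{gdef} does the work: one extracts the diagonal $z_1=z_2$, $z_3\to$ the relevant specialization from \eqref{gdef}, uses the definition \eqref{Qkdef} of $Q_{\boldsymbol{k}}$ together with \eqref{correct3}, and evaluates at $w=1$ (equivalently $z_3=p^{-1}$), where the factor $(1-z_3)^{-1}$ in \eqref{Qkdef} cancels against the $(1-q z_j^2 z_3^2)$-type denominators at $q=p$. The key algebraic identity is that the numerator of \eqref{gdef}, restricted appropriately, collapses so that the bracketed numerator cancels leaving exactly the three Euler factors above.

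The main obstacle is precisely this last local verification: one must carefully track which monomials $a(k_1,k_2,j;p)z_1^{k_1}z_2^{k_2}z_3^j$ in the power-series expansion \eqref{powerser} survive after setting $z_3=p^{-1}$ (the evaluation point forced by $w=1$ after the residue is taken) and after summing the $\boldsymbol{k}=(k_1,k_2)$ with $k_1+k_2=2k$, and then recognize the resulting generating function as the symmetric-square Euler factor. The parity facts \eqref{fact3} (only $k_1+k_2\equiv j\pmod 2$ contribute) and $\alpha_p\beta_p=1$ should make the bookkeeping manageable, and the cleanest route is to not expand at all but rather manipulate the closed form \eqref{gdef} directly: substitute $z_1=\alpha_p p^{-s}$, $z_2=\beta_p p^{-s}$, compute $g^{+}_3+g^{-}_3$ as in \eqref{Qkdef}, take $\mathrm{Res}_{z_3}$ at the pole coming from $(1-z_3)$, multiply by the Euler factor $\prod_{p\mid 2N}(1-p^{-1})$ absorbed into $\zeta^{(2N)}$, and read off the answer. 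I would also double-check that the poles of $g_{A_3}$ at $1-p z_1^2 z_3^2=0$ etc.\ do not interfere at the evaluation point, which holds since $|\alpha_p|=|\beta_p|=1$ (or are handled by the tube-domain restriction $\Omega_2$) so those factors are invertible there.
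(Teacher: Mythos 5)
Your proof is correct and follows essentially the same approach as the paper: it opens the representation \eqref{rep2}, isolates the simple pole arising only from $n_0=1$ when $\chi_{a_2 c_2}=\mathbf{1}$, drops the character $\chi_{a_1 c_1}(n_1^2)=1$, and reduces by multiplicativity to the local identity $(1-p^{-1})\,g_3^{+}(\alpha_p p^{-s},\beta_p p^{-s},p^{-1};p)=(1-p^{-2s})^{-1}(1-\alpha_p^2 p^{-2s})^{-1}(1-\beta_p^2 p^{-2s})^{-1}$, which is precisely what the paper obtains by direct computation from \eqref{plusminus}, \eqref{Qkdef} and \eqref{gdef}. The local verification you defer does carry through (the numerator of \eqref{gdef} factors cleanly at the specialization $z_1=\alpha_p p^{-s}$, $z_2=\beta_p p^{-s}$, $z_3=p^{-1}$, using $\alpha_p\beta_p=1$), and the denominators of $g_{A_3}$ are indeed nonvanishing there since $\Re(s)>1$ on $\Omega_2$.
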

\begin{proof}
Recall the representation \eqref{rep2}, valid for $(s,w) \in \Omega_2$.
 The only summands 
of \eqref{rep2} that have poles are those with $a_2=c_2=n_0=1$, and these are each simple and 
come from the Dirichlet $L$--function. 

Then 
\begin{align} \label{resid1}
\text{Res}_{w=1} & Z^{(N)}(s,w;\mathbf{1}, \chi_{a_1 c_1};\pi_f) \\
 &=\prod_{p \mid 2N} (1-p^{-1}) \sum_{\substack{m_1,m_2 \geq 1 \\ m_1 m_2=\square \\ 
(m_1 m_2, 2N)=1}} \frac{\mathcal{Q}_{m_1,m_2}(1;\mathbf{1};\pi_f)}{(m_1 m_2)^s} \nonumber \\
&=\prod_{p \mid 2N} (1-p^{-1}) \prod_{p \nmid 2N}  \Bigg(1+ \sum_{\substack{k_1+k_2 \geq 2 \\ k_1+k_2 \text{ even}} } 
\frac{\alpha_p^{k_1} \beta_p^{k_2} Q_{k_1,k_2}(p^{-1};p)}{p^{(k_1+k_2)s}} \Bigg) \nonumber  \\
&=\prod_{p \mid 2N} (1-p^{-1}) \prod_{p \nmid 2N} (1-p^{-1}) g_3^{+}(\alpha_p p^{-s},\beta_p p^{-s},p^{-1};p) \nonumber \\
&=\prod_{p \mid 2N} (1-p^{-1}) \prod_{p \nmid 2N} (1-p^{-2s})^{-1} (1-\alpha_p^2 p^{-2s} )^{-1} (1-\beta_p^2 p^{-2s} )^{-1}, \nonumber \\
&=L^{(2N)}(2s,\text{Sym}^2 \pi_f) \prod_{p \mid 2N} (1-p^{-1}).
\end{align}
Note that the third to last display follows from \eqref{Qkdef}, and the penultimate display from direct computation using
\eqref{plusminus} and \eqref{gdef}. 
\end{proof}

\section{Voronoi and Poisson summation with combinatorial weights} \label{mainargument}
Let $W: (0,\infty) \rightarrow \mathbb{R}$ be a smooth function compactly supported on $[1,2]$.
For a given twist minimal $f \in S^{\text{new}}_{\ell}(N)$, 
we
asymptotically evaluate 
\begin{equation} \label{moment}
M_{\pi_f}(X):=\sum_{\substack{d=d_0 d_1^2 \\ \mu^2(d_0)=1 \\ (d,2N)=1}} 
W \Big( \frac{d}{X} \Big) \frac{1}{d^{1/2}}
L^{(2N)}(1/2, \pi_f \otimes \chi_{d_0} ) \mathcal{P}_d (1/2;\mathbf{1};\pi_f), \quad X \rightarrow \infty,
\end{equation}
with error term uniform in both $X$ and $N$. 

The conductor of each 
$L (1/2, \pi_f \otimes \chi_{d_0} )$ in \eqref{moment} is $\asymp_{\ell} X^2 N$.
Let $R$ be a parameter satisfying $1 \leq R \ll_{\ell} X^2 N$. 
For $\varepsilon>0$ fixed and small, let $U:=(XN)^{\varepsilon}$
and $\gamma$ denote the straight line 
segment $[\varepsilon-iU,\varepsilon+iU]$.
For $a_1^{\prime} c_1^{\prime},  a_2^{\prime} c_2^{\prime} \in \text{Div}(N)$, 
consider the expressions
\begin{align}  
S&_{\pi_f}(X,R,a_1^{\prime} c_1^{\prime}) \nonumber \\
&:=\frac{1}{{c^{\prime}_1}^{1/2} } \int_{\gamma} 
\int_{\gamma} 
 \Bigg |\sum_{\substack{1 \leq d \leq UR c_1^{\prime}/X \\ (d,2N)=1 }} 
\frac{\mathcal{P}_{d}(1/2+s-w,\chi_{a_1^{\prime} c_1^{\prime}};\pi_f)}{d^{1/2+w}} \nonumber  \\
& \times \sum_{\substack{1 \leq n \leq U R   \\  (n,2N)=1 }} 
 \frac{\lambda_f(n) \chi_{a_1^{\prime} c_1^{\prime}}(n) \chi_{d_0}(n)}{n^{1/2+s-w}} \Bigg | 
  \Big |dw \frac{ds}{s}     \Big |; \label{S1} 
 \end{align}
 and 
\begin{align}
\widetilde{S}&_{\pi_f}(X,R,a_1^{\prime} c_1^{\prime},a_2^{\prime} c_2^{\prime} ) \nonumber \\
&:=\delta_{(c_1^{\prime},c_2^{\prime})=1} \cdot \delta_{c_2^{\prime} \mid N_0}
 \cdot \frac{1}{{c_1^{\prime}}^{1/2}} \frac{c_2^{\prime}}{N_0}  \int_{\gamma} \int_{\gamma} \nonumber \\
& \Bigg | \sum_{\substack{1 \leq d \leq
UXN N_0 c_1^{\prime}/R \\ (d,2N)=1  }} 
\frac{\mathcal{P}_{d}(1/2+w-s;\chi_{a^{\prime}_1 c^{\prime}_1} ; \pi_f) 
\chi_{a_2^{\prime} c_2^{\prime}}(d)}{d^{1/2+2s-w}}  \nonumber \\
& \times \sum_{\substack{1 \leq n \leq U X^2 N N_0/(R c_2^{\prime}) \\ (n,2N)=1}} 
\frac{\lambda_f(n) \chi_{a^{\prime}_1 c^{\prime}_1}(n) \chi_{d_0}(n)}{n^{1/2+w-s}}
\Bigg | \Big | \frac{ds}{s} dw     \Big |. \label{S2}
\end{align}

\begin{prop} \label{poissonprop}
Let $N$ be as in \eqref{factorise}, $f \in \mathcal{S}^{\emph{new}}_{\ell}(N)$ be twist minimal,
and $M_{\pi_f}$ 
be as in \eqref{moment}. 
Suppose that the statements in \eqref{GL2lindelof} and \eqref{GL3lindelof} hold with exponents 
$0<\delta_1,\delta_2<1/4$ respectively.

Then for $\varepsilon>0$, $X \geq 1$,  and $1 \leq R \ll_{\ell} X^2 N$,
\begin{align} \label{Mpif}
M_{\pi_f}(X)&=X^{1/2} \cdot \big(1+\delta_{N=\square} \cdot 
\varepsilon(\pi_f) \big) \cdot \widehat{W}(1/2) \cdot L^{(2N)}(1,\emph{Sym}^2 \pi_f)  
\cdot \prod_{p \mid 2N} (1-p^{-1})  \nonumber \\
&+ O_{\ell,\varepsilon} \Bigg( U \sum_{a^{\prime}_1 c^{\prime}_1 \in \emph{Div}(N)  } 
S_{\pi_f}(X,R,a_1^{\prime} c_1^{\prime} ) \Bigg) \nonumber \\
&+O_{\ell,\varepsilon} \Bigg( U \sum_{a_1^{\prime} c_1^{\prime}, a_2^{\prime} c_2^{\prime} \in \emph{Div}(N)}
   \widetilde{S}_{\pi_f}(X,R,a_1^{\prime} c_1^{\prime},a_2^{\prime} c_2^{\prime} ) \Bigg)  \nonumber \\
&+O_{\ell,\varepsilon} \Bigg( U \Big( 
\frac{(N_0^2 N_1^3)^{1/4-\delta_2} X^{1/2}}{R^{1/4}} 
+\frac{(N_0^2 N_1^3)^{1/4-\delta_2} R^{1/4}}{N^{1/4} N_0^{3/4}} +1 \Big) \Bigg),
\end{align}
where $U=(XN)^{\varepsilon}$.
\end{prop}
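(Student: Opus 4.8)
The plan is to turn the ``Voronoi then Poisson'' heuristic of Section~\ref{sketch} into an honest computation with the multiple Dirichlet series $Z^{(N)}(s,w;\chi_{a_2c_2},\chi_{a_1c_1};\pi_f)$. In that dictionary the Voronoi/approximate functional equation move is the functional equation \eqref{exacfunc1} (the involution $\gamma_1$), assembled from \eqref{functionaleq2} and \eqref{Pfunc}, and the Poisson move is \eqref{exacfunc2} (the involution $\gamma_2$), assembled from \eqref{dirtwistfunc} and \eqref{Qfunc2}. The three error families in \eqref{Mpif} will arise, in order, from: the dual $d$-sums produced by Poisson applied to the ``principal'' (length $\ll R$) side of the approximate functional equation; the dual $d$-sums produced by Poisson applied to the ``dual'' side; and the contour shift used to peel off the main term, on which $L^{(2N)}(\cdot,\text{Sym}^2\pi_f)$ is bounded by subconvexity. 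Absolute convergence of every contour shift below will be supplied by the polynomial growth bound \eqref{modintermedpolar} together with the rapid decay of $\widehat{W}$ and of the auxiliary AFE weight.

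First I would run the approximate functional equation. For $(d,2N)=1$ with $d=d_0d_1^2$, $\mu^2(d_0)=1$, set $D_d(s):=\mathcal{P}_d(s;\mathbf 1;\pi_f)\,L^{(2N)}(s,\pi_f\otimes\chi_{d_0})$, so that $M_{\pi_f}(X)=\sum_d W(d/X)\,d^{-1/2}D_d(1/2)$. Combining \eqref{functionaleq2} and \eqref{Pfunc} produces a functional equation for $D_d$ under $s\mapsto 1-s$, with conductor $\asymp N d_0^2 d_1^4$ and root number proportional to $\varepsilon(\pi_f)\chi_{d_0}(-N)=\varepsilon(\pi_f)\chi_{d_0}(-N_0)$; the last equality is the conductor drop that makes the whole argument work. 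Inserting a smooth even weight $G$ with $G(0)=1$ and the parameter $R$ as in \cite[Theorem~5.3]{IK} yields an unbalanced approximate functional equation expressing $D_d(1/2)=\Sigma_+^{(d)}+\Sigma_-^{(d)}$, where $\Sigma_+^{(d)}$ is a smoothed Dirichlet sum of length $\ll RU$ twisted by $\chi_{d_0}(n)$, and $\Sigma_-^{(d)}$ is a smoothed Dirichlet sum of length $\ll X^2NN_0U/R$ (on the support $d\asymp X$) twisted by $\chi_{d_0}(-N_0 n)$ and carrying the factor $\varepsilon(\pi_f)$. The auxiliary contour integral of this step is precisely the $ds/s$-integral appearing in \eqref{S1} and \eqref{S2}, and $M_{\pi_f}(X)$ splits as $M_+(X)+M_-(X)$.

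Next I would Poisson-sum in $d$, which at the level of the series is \eqref{exacfunc2}: the $4\,\mathsf{rad}(N)$-dimensional vector structure, indexed by $\text{Div}(N)$, is the multiple Dirichlet series incarnation of M\"obius inverting the condition $(d,2N)=1$, and opening the remaining smooth weight by Mellin inversion introduces the $dw$-integral. For $M_+(X)$ the non-zero frequencies give a dual $d$-sum of length $\ll URc_1'/X$, bounded via \eqref{Pbound} by $U\,S_{\pi_f}(X,R,a_1'c_1')$; for $M_-(X)$ they give a dual $d$-sum of length $\ll UXNN_0c_1'/R$, bounded by $U\,\widetilde S_{\pi_f}(X,R,a_1'c_1',a_2'c_2')$, the support conditions $\delta_{(c_1',c_2')=1}$, $\delta_{c_2'\mid N_0}$ and the weight $c_2'/N_0$ being forced by the conductor drop $\chi_d(-N)=\chi_d(-N_0)$ and recorded in \eqref{tildepsi}. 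The main terms come from the zero-frequency contributions, which are nonzero only when the argument of the twisting character is a perfect square: for $M_+(X)$ this is the polar hyperplane $w=1$ of $Z^{(N)}(1/2,w;\mathbf 1,\mathbf 1;\pi_f)$, whose residue is $L^{(2N)}(2s,\text{Sym}^2\pi_f)\prod_{p\mid 2N}(1-p^{-1})$ by Lemma~\ref{R1}, and evaluating at $s=1/2$ after the $s$-contour shift produces $X^{1/2}\widehat W(1/2)\,L^{(2N)}(1,\text{Sym}^2\pi_f)\prod_{p\mid 2N}(1-p^{-1})$, the ``$1$'' in $1+\delta_{N=\square}\varepsilon(\pi_f)$; for $M_-(X)$ the requirement that $-N_0 n$ be a perfect square, together with $\mu^2(N_0)=1$, forces $N_0=1$, i.e.\ $N=\square$, in which case $n$ must itself be a square and the same residue computation (Lemma~\ref{R1}) contributes $\varepsilon(\pi_f)$ times the identical quantity. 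Peeling off these residues requires shifting the auxiliary AFE contour so that $L^{(2N)}(\cdot,\text{Sym}^2\pi_f)$ is evaluated on its critical line and estimated via \eqref{GL3lindelof}, which costs
\[
\ll U\Big((N_0^2N_1^3)^{1/4-\delta_2}\,X^{1/2}R^{-1/4}+(N_0^2N_1^3)^{1/4-\delta_2}\,R^{1/4}N^{-1/4}N_0^{-3/4}+1\Big),
\]
the last error term in \eqref{Mpif}.

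The step I expect to be the main obstacle is this Poisson/$\gamma_2$ move carried out \emph{with} the correction polynomials $\mathcal{P}_d$ in place and \emph{after} the M\"obius inversion of $(d,2N)=1$: one has to pin down the conductor raising precisely --- the $\text{Div}(N)$-sums, the support conditions $\delta_{(c_1',c_2')=1}$ and $\delta_{c_2'\mid N_0}$, and the weight $c_2'/N_0$ --- which is exactly the content of the explicit scattering matrix of Lemma~\ref{matrixfunc2}, and one has to identify which zero frequencies survive and with what constant, which is where Lemma~\ref{R1} and the elementary fact $N=\square\iff N_0=1$ (for $N$ as in \eqref{factorise}) enter. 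The crude bounds \eqref{Pbound} and \eqref{Qbound} keep the dual-sum integrands under control, and the growth estimate \eqref{modintermedpolar} of Proposition~\ref{merocont} justifies every interchange of summation with integration and every contour shift.
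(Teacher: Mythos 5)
Your proposal is correct and follows essentially the same path as the paper: set up the double Mellin integral $J_{\pi_f}(X,R)$ of $Z^{(N)}(1/2+s,1/2+w;\mathbf 1,\mathbf 1;\pi_f)$, extract $M_{\pi_f}(X)$ as a residue, apply $\gamma_1$ (the paper's \eqref{exacfunc1}) to split into the $J_{\pi_f}$ and $K_{\pi_f}$ pieces, then apply $\gamma_2$ (\eqref{exacfunc2}) to each, with the two main terms coming from the polar hyperplanes (via Lemma~\ref{R1}), the vanishing of the second main term unless $N=\square$ coming from $\Phi_{11}(1/2;\pi_f)_{1111}=\delta_{N=\square}\varepsilon(\pi_f)$, and the final error from a contour shift bounded by \eqref{GL3lindelof}. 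You frame the first move as the unbalanced approximate functional equation rather than introducing $J_{\pi_f}$ as a contour integral and taking a residue at $s=0$, and your ``$-N_0n=\square$ forces $N_0=1$'' is the heuristic underlying the scattering-matrix identity $\Phi_{11}(1/2;\pi_f)_{1111}=\delta_{N=\square}\varepsilon(\pi_f)$, but since you explicitly defer the precise conductor-raising bookkeeping to Lemma~\ref{matrixfunc2} and the zero-frequency evaluation to Lemma~\ref{R1}, this is a presentational difference rather than a different argument.
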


\begin{remark}
Consider the extremal case when $\mathfrak{c}(\pi_f)=N$ is a perfect square. 
When one twists $\pi_f$ by an a Dirichlet character $\chi_{d_0}$ 
with $d_0>0$ (i.e. it is even) and $(d_0,2N)=1$, 
then $\varepsilon(\pi_f \otimes \chi_{d_0})=\varepsilon(\pi_f)$.  

In this case, we would expect the main term to double
when $\varepsilon(\pi_f)=1$, and vanish identically when $\varepsilon(\pi_f)=-1$.
This explains the factor $1+\delta_{N=\square} \cdot \varepsilon(\pi_f)$ in 
Proposition \ref{poissonprop}. 
 \end{remark}

\begin{proof}
Let $H$ an even holomorphic function that satisfies $H(0)=1$ and
the growth estimate 
\begin{equation*}
|H(z)| \ll_{\Re(z),C} (1+|z|)^{-C}, \quad \text{for any} \quad C>0.
\end{equation*}

Consider the integral
\begin{equation} \label{Jpir}
J_{\pi_f}(X,R):=\frac{1}{(2 \pi i)^2} \int_{(4.6)} \int_{(2)} \widehat{W}(w) H(s)
 Z^{(N)} (1/2+s,1/2+w;\mathbf{1},\mathbf{1};\pi_f) X^{w} R^{s} \frac{ds}{s} dw.
\end{equation}
We move the $s$-contour to $\Re(s)=-2$ and encounter a pole at $s=0$. The residue is  
\begin{equation*}
\frac{1}{2 \pi i} \int_{(4.6)} \widehat{W}(w) Z^{(N)}(1/2,1/2+w;\mathbf{1},\mathbf{1};\pi_f) X^{w} dw = M_{\pi_f}(X),
\end{equation*}
where the equality follows from Lemma \ref{rep1lem} and Mellin inversion. 

We make the change of variable 
$s \rightarrow -s$ in the shifted integral, 
and then apply \eqref{exacfunc1}. The net result is 
\begin{equation} \label{mastereqn}
M_{\pi_f}(X)=J_{\pi_f}(X,R) +
\sum_{a_2^{\prime} c_2^{\prime} \in \text{Div}(N)} 
 K_{\pi_f}(X,R,a^{\prime}_2 c^{\prime}_2),
\end{equation}
where 
\begin{align} 
K_{\pi_f}(X,R,a^{\prime}_2 c^{\prime}_2)&:=\frac{1}{(2 \pi i)^2} \int_{(4.6)} \int_{(2)} \widehat{W}(w) H(s) 
 \Phi_{11}(1/2-s;\pi_f)_{11a_2^{\prime} c_2^{\prime}} \nonumber  \\
& \times Z^{(N)} (1/2+s,1/2+w-2s;\chi_{a_2^{\prime} c_2^{\prime}} ,\mathbf{1};\pi_f) X^{w} R^{-s} 
\frac{ds}{s} dw.  \label{Kpiac}
\end{align} 

\subsection{Treatment of $J_{\pi_f}(X,R)$} \label{Jpisec}
We interchange the $s$ and $w$ integrations in \eqref{Jpir} by Fubini's Theorem, 
and then move the $w$-contour to $\Re(w)=-1$. We encounter a pole at $w=1/2$, 
whose residue can be computed using Lemma \ref{R1}. No other poles are encountered
because the domain of integration remains in $\Omega_2$ when this perturbation of contour is made 
(see Lemmas \ref{rep1lem} and \ref{R1}).

We make the change of variable $w \rightarrow -w$ in the shifted integral, 
and then apply \eqref{exacfunc2}. The net result is
\begin{align} \label{Jpi}
J_{\pi_f}(X,R)&=\frac{1}{2 \pi i}  \widehat{W}(1/2) X^{1/2} 
\prod_{p \mid 2N} (1-p^{-1})
 \int_{(2)}H(s) L^{(2N)}(1+2s,\text{Sym}^2 \pi_f) R^{s} \frac{ds}{s} \nonumber \\
&+ \sum_{\substack{a_1^{\prime} c_1^{\prime} \in \text{Div}(N) }}
 I_{\pi_f}(X,R,a^{\prime}_1 c^{\prime}_1),
\end{align}
where 
\begin{align*}
I_{\pi_f}(X,R,a^{\prime}_1 c^{\prime}_1)&:=\frac{1}{(2 \pi i)^2} \int_{(2)} \int_{(1)} 
\widehat{W}(-w) H(s) \Psi_{11}(1/2-w;\pi_f)_{11a_1^{\prime}  c_1^{\prime}}  \nonumber  \\
& \times Z^{(N)}(1/2+s-w,1/2+w;\mathbf{1},\chi_{a_1^{\prime} c_1^{\prime}};\pi_f) 
X^{-w} R^{s} dw \frac{ds}{s}.
\end{align*}

\subsubsection{First main term}
We move the $s$-contour in the first term of \eqref{Jpi} to $\Re(s)=-1/4$, 
encountering a pole at $s=0$.
We obtain 
\begin{align} \label{first}
\frac{1}{2 \pi i} &  \widehat{W}(1/2) X^{1/2} \prod_{p \mid 2N} (1-p^{-1}) \int_{(2)}H(s) 
L^{(2N)}(1+2s,\text{Sym}^2 \pi_f) R^{s} \frac{ds}{s} \nonumber \\
&=X^{1/2}  \widehat{W}(1/2) L^{(2N)}(1,\text{Sym}^2 \pi_f) \prod_{p \mid 2N} (1-p^{-1}) 
+O_{\ell,\varepsilon} \big( U (N_0^2 N_1^3)^{1/4-\delta_2} X^{1/2} R^{-1/4} \big),
\end{align}
where the error term follows from hypothesis \eqref{GL3lindelof} and a trivial estimation
of the missing Euler factors.
Thus \eqref{first} gives one of the main terms and one of the error terms 
in Proposition \ref{poissonprop}.
\subsubsection{Treatment of $I_{\pi_f}(X,R,a^{\prime}_1 c^{\prime}_1)$}
Using Lemma \ref{rep1lem} we obtain
\begin{align} \label{Ipiint}
I_{\pi_f}(X,R,a^{\prime}_1 c^{\prime}_1)&:=\frac{1}{(2 \pi i)^2}   
 \sum_{\substack{d \geq 1 \\ (d,2N)=1}}  \frac{1}{d^{1/2}}
\sum_{\substack{n \geq 1 \\ (n,2N)=1}} \frac{\lambda_f(n) \chi_{a_1^{\prime} c_1^{\prime}}(n) 
 \chi_{d_0}(n) }{n^{1/2}}  \nonumber \\  
 & \times \int_{(2)} \int_{(1)} \widehat{W}(-w) H(s) 
  \Psi_{11}(1/2-w;\pi_f)_{11a_1^{\prime} c_1^{\prime}} \nonumber  \\
 & \times \mathcal{P}_d(1/2+s-w,\chi_{a_1^{\prime} c_1^{\prime}};\pi_f)  
 \left( \frac{dX}{n} \right)^{-w} \left( \frac{n}{R} \right)^{-s}     
 dw  \frac{ds}{s}.
\end{align}

Observe from \eqref{tildepsi} that
\begin{equation} \label{psisimp}
\Psi_{a_2^{\prime} c_2^{\prime}}(1/2-w;\pi_f)_{11a_1^{\prime} c_1^{\prime}}=
\delta_{(c_1^{\prime},c_2^{\prime})=1}
{c_2^{\prime}}^{w} {c_1^{\prime}}^{-1/2+w} A(w), \qquad \Re(w) \geq \varepsilon,
\end{equation}
where $A(w)$ is a holomorphic in the given half-plane. 
For each $w \in \mathbb{C}$ with $\Re(w) \geq \varepsilon$, 
there exists a constant $C_1:=C_1 \big({\Re(w)} \big)$
such that 
\begin{equation} \label{Abound}
|A(w)| \ll_{\Re(w),\varepsilon} 
N^{\varepsilon} (1+|\Im(w)|)^{C_1}.
\end{equation}

We move the $s$--contour in \eqref{Ipiint} to $\Re(s)=B_1+2$ 
for sufficiently large $B_1>0$.
Recall the divisor bound $|\lambda_f(n)| \leq d(n)$ \cite{De} 
and Lemma \ref{corrbound} for the weights $\mathcal{P}_d$.
The net contribution 
to $I_{\pi_f}(X,R,a^{\prime}_1 c^{\prime}_1)$ from all $n \geq U R$ is 
$O_{\ell, B_1 \varepsilon} \big((XN)^{-B_2} \big)$
and some $B_2>0$.
We truncate the $n$ sum \eqref{Ipiint} to the range $1 \leq n \leq U R$.

We next move the $w$-contour to $\Re(w)=\Re(s)=B_1+2$. 
We again use divisor bounds, as well as \eqref{psisimp} and \eqref{Abound}. 
We truncate 
the $d$-sum to the range $1 \leq d \leq  UR c_1^{\prime}/X$,
incurring a negligible error.

We now move the $s$ and $w$ contours to $\Re(s)=\Re(w)=\varepsilon$.
By the rapid decay of $\widehat{W}$ and $H$ we can truncate
 the $s,w$-integrations in \eqref{Ipiint}
to $|\Im(s)|, |\Im(w)| \leq U$ with negligible error.
We interchange the finite summations with the absolutely convergent integrals,
and use \eqref{psisimp} and \eqref{Abound} in \eqref{Ipiint}. 
We obtain the error terms involving $S_{\pi_f}(X,R,a_1^{\prime} c_1^{\prime})$ in 
Proposition of \ref{poissonprop}.

\subsection{Treatment of $K_{\pi_f}(X,R,a_2^{\prime} c^{\prime}_2)$}
\subsubsection{Second main term}
We interchange the $s$ and $w$ integrations in \eqref{Kpiac} by Fubini's Theorem,
and then move the $w$-contour to $\Re(w)=4$ for each \eqref{Kpiac}. Observe
that the domain of integration remains in $\Omega_2$ 
(see Lemmas \ref{rep1lem} and \ref{R1}). 

We encounter the polar hyperplane $1/2+w-2s=1$ when $a_2^{\prime} c_2^{\prime}=11$,
otherwise there are no poles encountered for this move when 
$a_2^{\prime} c_2^{\prime} \neq 11$. We can 
 compute its residue using Lemma \ref{R1}. 
Observe that \eqref{tildephi} gives 
\begin{equation*}
\Phi_{11}(1/2;\pi_{f})_{1111}=\delta_{N=\square} \cdot \varepsilon(\pi_f).
\end{equation*}
The residue is
\begin{align} \label{k1res}
\frac{1}{2 \pi i} & X^{1/2} \prod_{p \mid 2N} (1-p^{-1}) 
 \int_{(2)} \widehat{W}(1/2+2s) H(s) \Phi_{11}(1/2-s;\pi_f)_{1111} \nonumber \\
& \times L^{(2N)}(1+2s, \text{Sym}^2 \pi_f) X^{2s} R^{-s} \frac{ds}{s} \nonumber \\
&= \delta_{N=\square} \cdot \varepsilon(\pi_f) X^{1/2} L^{(2N)}(1,\text{Sym}^2 \pi_f) \widehat{W}(1/2) 
\prod_{p \mid 2N} (1-p^{-1}) \nonumber \\
&+O_{\ell,\varepsilon} \big( U (N_0^2 N_1^3)^{1/4-\delta_2} N^{-1/4} N_0^{-3/4} R^{1/4} \big),
\end{align}
where the error term follows from \eqref{tildephi} and the hypothesis \eqref{GL3lindelof}.
Substituting \eqref{k1res} back into \eqref{mastereqn} gives the 
second main term and another error term in Proposition \ref{poissonprop}. 

For each $a_2^{\prime} c_2^{\prime} \in \text{Div}(N)$ we are left to estimate 
\begin{align} \label{remainint}
L_{\pi_f}(X,R,a_2^{\prime} c_2^{\prime}):= \frac{1}{(2 \pi i)^2} & \int_{(2)}  \int_{(4)}  \widehat{W}(w) H(s)  
Z^{(N)} (1/2+s,1/2+w-2s;\chi_{a_2^{\prime} c_2^{\prime}},\mathbf{1};\pi_f) \nonumber \\
& \times \Phi_{11}(1/2-s;\pi_f)_{11a_2^{\prime} c_2^{\prime}}  X^{w} R^{-s} dw \frac{ds}{s}.
\end{align}
We handle this in the next argument.

\subsubsection{Remaining terms}
Applying \eqref{exacfunc2} to \eqref{remainint} gives
\begin{align} \label{Kpidevelop}  
L_{\pi_f}(X,R,a^{\prime}_2 c^{\prime}_2)&:=\frac{1}{(2 \pi i)^2} \sum_{a_1^{\prime} c_1^{\prime} \in \text{Div}(N)}
 \int_{(2)}  \int_{(4)} \widehat{W}(w) H(s)  \Phi_{11}(1/2-s;\pi_f)_{11a_2^{\prime} c_2^{\prime}} \nonumber \\ 
& \times \Psi_{a_2^{\prime} c_2^{\prime}}(1/2+w-2s;\pi_f)_{11 a_1^{\prime} c_1^{\prime}} \nonumber  \\
 & \times Z^{(N)} (1/2+w-s,1/2+2s-w;\chi_{a_2^{\prime} c_2^{\prime}} ,\chi_{a_1^{\prime} c_1^{\prime}};\pi_f)  \nonumber \\
& \times X^{w} R^{-s} dw \frac{ds}{s}.
\end{align}

We will again use the decay of $\Phi$ coming from the $\pi_{f,p}$ that
are special representations. 
A particular case of the formula \eqref{tildephi} can be written as
\begin{equation} \label{phidecay} 
\Phi_{11}(1/2-s;\pi_f)_{11a_2^{\prime} c_2^{\prime}}= 
N^{s} \delta_{c_2^{\prime} \mid N_0} 
\Big( \frac{N_0}{c_2^{\prime}} \Big)^{-1+s} B(s), \qquad \Re(s) \geq \varepsilon,
\end{equation}
where $B(s)$ is a holomorphic in the given half plane. 
For each $s \in \mathbb{C}$ with $\Re(s) \geq \varepsilon$, 
there exists a constant $C_2:=C_2 \big({\Re(s),\ell} \big)$
such that
\begin{equation} \label{Bbound}
|B(s)| \ll_{\Re(s),\ell,\varepsilon} 
N^{\varepsilon} (1+|\Im(s)|)^{C_2}.
\end{equation}

We use Lemma \ref{rep1lem} to open the multiple Dirichlet series in \eqref{Kpidevelop}.
We then argue similarly to Section \ref{Jpisec}, except now we appeal to
\eqref{psisimp}, \eqref{Abound}, \eqref{phidecay} and \eqref{Bbound}. 
We obtain the error terms $\widetilde{S}_{\pi_f}(X,R,a_1^{\prime} c_1^{\prime}, a_2^{\prime} c_2^{\prime} )$ in 
Proposition \ref{poissonprop}. 
This completes the proof.
\end{proof} 
\section{Endgame} \label{endgame}
\begin{proof}[Proof of Theorem \ref{mainthm}]
We use Proposition \ref{poissonprop} with $X \geq 1$
and  $1 \leq R \ll_{\ell} X^2N$ to be chosen later.

We then apply Lemma \ref{lindelof} to estimate the $S_{\pi_f}$ and 
$\widetilde{S}_{\pi_f}$ terms defined in \eqref{S1} and \eqref{S2} respectively.
We also exploit that the fact that $c_1^{\prime} \mid \mathsf{rad}(N)=N_0 N_1$
in both \eqref{S1} and \eqref{S2}.
The net result is
\begin{align} 
M_{\pi_f}(X)&=X^{1/2} \cdot \big(1+\delta_{N=\square} \cdot \varepsilon(\pi_f) \big) \cdot \widehat{W}(1/2) \cdot L^{(2N)}(1,\text{Sym}^2 \pi_f)  
\cdot \prod_{p \mid 2N} (1-p^{-1})  \nonumber \\
&+ O_{\ell,\varepsilon} \Bigg( U \bigg[  \Big(\frac{R}{X} \Big)^{1-2 \delta_1} (N N_0^3 N_1^3)^{1/4-\delta_1}
+\Big(\frac{X N N_0}{R} \Big)^{1-2 \delta_1} (N N_0^3 N_1^3)^{1/4-\delta_1} \bigg] \Bigg) \nonumber \\
&+O_{\ell,\varepsilon} 
\Bigg( U \bigg[\frac{(N_0^2 N_1^3)^{1/4-\delta_2} X^{1/2}}{R^{1/4}} 
+\frac{(N_0^2 N_1^3)^{1/4-\delta_2} R^{1/4}}{N^{1/4} N_0^{3/4}} +1 \bigg] \Bigg). \nonumber 
\end{align}

We choose $R:=X(NN_0)^{1/2}$ to balance the error 
terms in the middle display above. This yields 
\begin{align} \label{major}
M_{\pi_f}(X)&=X^{1/2} \cdot \big(1+\delta_{N=\square} \cdot \varepsilon(\pi_f) \big) \cdot \widehat{W}(1/2) \cdot L^{(2N)}(1,\text{Sym}^2 \pi_f)  
\cdot \prod_{p \mid 2N} (1-p^{-1})  \nonumber \\
&+O_{\ell,\varepsilon} \bigg( U \Big( N^{9/8-(7/2) \delta_1} N_0^{7/8-(5/2) \delta_1}
+\frac{X^{1/4} N^{1/4-(3/2) \delta_2}}{N_0^{\delta_2/2}}
 \Big)   \bigg).
\end{align}

 A Theorem of Hoffstein and Lockhart \cite[pg.~164]{HL} ensures that
\begin{equation*}
L(1,\text{Sym}^2 \pi_f) \gg_{\ell,\varepsilon} N^{-\varepsilon},
\end{equation*}
with ineffective constant depending on $\varepsilon>0$. Taking 
\begin{equation*}
X \gg_{\varepsilon} N^{\varepsilon} \Big( N^{9/4-7 \delta_1} N_0^{7/4-5 \delta_1}
+\frac{N^{1-6 \delta_2  }}{N_0^{2 \delta_2}} \Big)
\end{equation*}
shows that the error terms in \eqref{major} are smaller 
than the main term for $M_{\pi_f}(X)$.
Hence there is 
 a fundamental discriminant $d$ in the desired range such that
 \begin{equation*}
L^{(2N)}(1/2, \pi_f \otimes \chi_d) \neq 0.
\end{equation*}
A trivial estimation of the missing Euler factors 
yields Theorem \ref{mainthm}.
\end{proof}
\bibliographystyle{amsalpha}
\bibliography{AJDTwist4}

\newcommand{\etalchar}[1]{$^{#1}$}
\providecommand{\bysame}{\leavevmode\hbox to3em{\hrulefill}\thinspace}
\providecommand{\MR}{\relax\ifhmode\unskip\space\fi MR }
\providecommand{\MRhref}[2]{%
  \href{http://www.ams.org/mathscinet-getitem?mr=#1}{#2}
}
\providecommand{\href}[2]{#2}
\begin{thebibliography}{BCDT01}

\bibitem[AL78]{AL}
A.~O.~L. Atkin and W.~Li, \emph{Twists of newforms and pseudo-eigenvalues of
  {$W$}-operators}, Invent. Math. \textbf{48} (1978), no.~3, 221--243.
  \MR{508986}

\bibitem[BBC{\etalchar{+}}06]{BBCFH}
B.~Brubaker, D.~Bump, G.~Chinta, S.~Friedberg, and J.~Hoffstein, \emph{Weyl
  group multiple {D}irichlet series. {I}}, Multiple {D}irichlet series,
  automorphic forms, and analytic number theory, Proc. Sympos. Pure Math.,
  vol.~75, Amer. Math. Soc., Providence, RI, 2006, pp.~91--114. \MR{2279932}

\bibitem[BCDT01]{BCDT}
C.~Breuil, B.~Conrad, F.~Diamond, and R.~Taylor, \emph{On the modularity of
  elliptic curves over {$\bold Q$}: wild 3-adic exercises}, J. Amer. Math. Soc.
  \textbf{14} (2001), no.~4, 843--939. \MR{1839918}

\bibitem[BFH90a]{BFH2}
D.~Bump, S.~Friedberg, and J.~Hoffstein, \emph{Eisenstein series on the
  metaplectic group and nonvanishing theorems for automorphic {$L$}-functions
  and their derivatives}, Ann. of Math. (2) \textbf{131} (1990), no.~1,
  53--127. \MR{1038358}

\bibitem[BFH90b]{BFH1}
\bysame, \emph{Nonvanishing theorems for {$L$}-functions of modular forms and
  their derivatives}, Invent. Math. \textbf{102} (1990), no.~3, 543--618.
  \MR{1074487}

\bibitem[BFH04]{BFH}
\bysame, \emph{Sums of twisted {${\rm GL}(3)$} automorphic {$L$}-functions},
  Contributions to automorphic forms, geometry, and number theory, Johns
  Hopkins Univ. Press, Baltimore, MD, 2004, pp.~131--162. \MR{2058607}

\bibitem[Blo11]{Blo}
V.~Blomer, \emph{Subconvexity for a double {D}irichlet series}, Compos. Math.
  \textbf{147} (2011), no.~2, 355--374. \MR{2776608}

\bibitem[Boc38]{Bo}
S.~Bochner, \emph{A theorem on analytic continuation of functions in several
  variables}, Ann. of Math. (2) \textbf{39} (1938), no.~1, 14--19. \MR{1503384}

\bibitem[CG07]{CG1}
G.~Chinta and Paul~E. Gunnells, \emph{Weyl group multiple {D}irichlet series
  constructed from quadratic characters}, Invent. Math. \textbf{167} (2007),
  no.~2, 327--353. \MR{2270457}

\bibitem[CG10]{CG2}
G.~Chinta and P.~E. Gunnells, \emph{Constructing {W}eyl group multiple
  {D}irichlet series}, J. Amer. Math. Soc. \textbf{23} (2010), no.~1, 189--215.
  \MR{2552251}

\bibitem[Dah18]{Dah}
A.~Dahl, \emph{Subconvexity for a double {D}irichlet series and non-vanishing
  of {$L$}-functions}, Int. J. Number Theory \textbf{14} (2018), no.~6,
  1573--1604. \MR{3827947}

\bibitem[Del74]{De}
P.~Deligne, \emph{La conjecture de {W}eil. {I}}, Inst. Hautes \'{E}tudes Sci.
  Publ. Math. (1974), no.~43, 273--307. \MR{340258}

\bibitem[DGH03]{DGH}
A.~Diaconu, D.~Goldfeld, and J.~Hoffstein, \emph{Multiple {D}irichlet series
  and moments of zeta and {$L$}-functions}, Compositio Math. \textbf{139}
  (2003), no.~3, 297--360. \MR{2041614}

\bibitem[Dia19]{Di}
Adrian Diaconu, \emph{On the third moment of {$L(\frac{1}{2},\chi_d)$} {I}:
  {T}he rational function field case}, J. Number Theory \textbf{198} (2019),
  1--42. \MR{3912928}

\bibitem[DW18]{DW}
A.~Diaconu and I.~Whitehead, \emph{On the third moment of {$L(\frac{1}{2},
  \chi_d)$} {II}: the number field case}, 2018.

\bibitem[FH95]{FH}
S.~Friedberg and J.~Hoffstein, \emph{Nonvanishing theorems for automorphic
  {$L$}-functions on {${\rm GL}(2)$}}, Ann. of Math. (2) \textbf{142} (1995),
  no.~2, 385--423. \MR{1343325}

\bibitem[HB95]{HB}
D.~R. Heath-Brown, \emph{A mean value estimate for real character sums}, Acta
  Arith. \textbf{72} (1995), no.~3, 235--275. \MR{1347489}

\bibitem[HK10]{HK}
J.~Hoffstein and A.~Kontorovich, \emph{The first non-vanishing quadratic twist
  of an automorphic {L}-series}, https://arxiv.org/pdf/1008.0839.pdf (2010),
  1--30.

\bibitem[HL94]{HL}
J.~Hoffstein and P.~Lockhart, \emph{Coefficients of {M}aass forms and the
  {S}iegel zero}, Ann. of Math. (2) \textbf{140} (1994), no.~1, 161--181, With
  an appendix by Dorian Goldfeld, Hoffstein and Daniel Lieman. \MR{1289494}

\bibitem[IK04]{IK}
H.~Iwaniec and E.~Kowalski, \emph{Analytic number theory}, American
  Mathematical Society Colloquium Publications, vol.~53, American Mathematical
  Society, Providence, RI, 2004. \MR{2061214}

\bibitem[Iwa90]{Iw}
H.~Iwaniec, \emph{On the order of vanishing of modular {$L$}-functions at the
  critical point}, S\'{e}m. Th\'{e}or. Nombres Bordeaux (2) \textbf{2} (1990),
  no.~2, 365--376. \MR{1081731}

\bibitem[Kol88]{KVA}
V.~A. Kolyvagin, \emph{Finiteness of {$E(\mathbb{Q})$} and
  {CH}({E},$\mathbb{Q}$) for a subclass of {W}eil curves}, Izv. Akad. Nauk SSSR
  Ser. Mat. \textbf{52} (1988), no.~3, 522--540, 670--671. \MR{954295}

\bibitem[Li79]{Li2}
W.~Li, \emph{{$L$}-series of {R}ankin type and their functional equations},
  Math. Ann. \textbf{244} (1979), no.~2, 135--166. \MR{550843}

\bibitem[LW12]{LW}
D.~Loeffler and J.~Weinstein, \emph{On the computation of local components of a
  newform}, Math. Comp. \textbf{81} (2012), no.~278, 1179--1200. \MR{2869056}

\bibitem[LY14]{LY}
S.C. Liu and M.P. Young, \emph{Growth and nonvanishing of restricted {S}iegel
  modular forms arising as {S}aito-{K}urokawa lifts}, Amer. J. Math.
  \textbf{136} (2014), no.~1, 165--201. \MR{3163357}

\bibitem[MM91]{MM}
M.~R. Murty and V.~K. Murty, \emph{Mean values of derivatives of modular
  {$L$}-series}, Ann. of Math. (2) \textbf{133} (1991), no.~3, 447--475.
  \MR{1109350}

\bibitem[MV10]{MV}
P.~Michel and A.~Venkatesh, \emph{The subconvexity problem for {${GL}(2)$}},
  Publ. Math. Inst. Hautes \'{E}tudes Sci. (2010), no.~111, 171--271.
  \MR{2653249}

\bibitem[OS98]{OS}
K.~Ono and C.~Skinner, \emph{Non-vanishing of quadratic twists of modular
  {$L$}-functions}, Invent. Math. \textbf{134} (1998), no.~3, 651--660.
  \MR{1660945}

\bibitem[Pet14]{Pet}
I.~Petrow, \emph{Moments of {$L'(\frac12)$} in the family of quadratic twists},
  Int. Math. Res. Not. IMRN (2014), no.~6, 1576--1612. \MR{3180602}

\bibitem[Wal81]{Wal}
J.-L. Waldspurger, \emph{Sur les coefficients de {F}ourier des formes
  modulaires de poids demi-entier}, J. Math. Pures Appl. (9) \textbf{60}
  (1981), no.~4, 375--484. \MR{646366}

\bibitem[Whi14]{W2}
I.~Whitehead, \emph{Multiple {D}irichlet series for affine {W}eyl groups},
  2014.

\bibitem[Whi16]{W1}
\bysame, \emph{Affine {W}eyl group multiple {D}irichlet series: type
  {$\widetilde{A}$}}, Compos. Math. \textbf{152} (2016), no.~12, 2503--2523.
  \MR{3594285}

\end{thebibliography}
\end{document}